\documentclass[11pt]{article}
%DIF LATEXDIFF DIFFERENCE FILE

%DIF 2c2
%DIF < \usepackage{amsmath, amssymb, amsthm, verbatim,enumerate,bbm, color}
%DIF -------
\usepackage{amsmath, amssymb, amsthm, verbatim,enumerate,bbm,color} %DIF >
%DIF -------
\usepackage{indentfirst}

\title{Efficient Removal without Efficient Regularity}

\author{Lior Gishboliner \thanks{School of Mathematical Sciences, Tel Aviv University, Tel Aviv, 69978, Israel.}
\and Asaf Shapira \thanks{
School of Mathematical Sciences, Tel Aviv University, Tel Aviv 69978, Israel.
Email: asafico$@$tau.ac.il. Supported in part by ISF Grant 1028/16 and ERC Starting Grant 633509.}
}

\date{\today}
\parindent 5mm
\parskip 0.2mm
\oddsidemargin  0pt \evensidemargin 0pt \marginparwidth 0pt
\marginparsep 0pt \topmargin 0pt \headsep 0pt \textheight 8.8in
\textwidth 6.6in

\allowdisplaybreaks

\theoremstyle{plain}
\newtheorem{theorem}{Theorem}[section]
\newtheorem{lemma}[theorem]{Lemma}
\newtheorem{claim}[theorem]{Claim}

%Hypergraphs

%Letters

\newcommand{\V}{\mathcal V}

\newcommand{\Q}{\mathcal Q}

\newcommand{\Part}{\mathcal P}
\newcommand{\F}{\mathcal F}

\newcommand{\W}{\mathcal W}

\newcommand{\poly}{\text{poly}}
%DIF PREAMBLE EXTENSION ADDED BY LATEXDIFF
%DIF UNDERLINE PREAMBLE %DIF PREAMBLE
\RequirePackage[normalem]{ulem} %DIF PREAMBLE
\RequirePackage{color}\definecolor{RED}{rgb}{1,0,0}\definecolor{BLUE}{rgb}{0,0,1} %DIF PREAMBLE
 %DIF PREAMBLE
                      %DIF PREAMBLE
%DIF SAFE PREAMBLE %DIF PREAMBLE
 %DIF PREAMBLE
 %DIF PREAMBLE
 %DIF PREAMBLE
 %DIF PREAMBLE
%DIF FLOATSAFE PREAMBLE %DIF PREAMBLE
 %DIF PREAMBLE
 %DIF PREAMBLE
 %DIF PREAMBLE
 %DIF PREAMBLE
 %DIF PREAMBLE
 %DIF PREAMBLE
%DIF END PREAMBLE EXTENSION ADDED BY LATEXDIFF

\begin{document}

\maketitle
\begin{abstract}
Obtaining an efficient bound for the triangle removal lemma is one of the most outstanding open problems
of extremal combinatorics. Perhaps the main bottleneck for achieving this goal is that
triangle-free graphs can be highly unstructured. For example, triangle-free graphs might have
only regular partitions (in the sense of Szemer\'edi) of tower-type size. And indeed, essentially all the graph
properties ${\cal P}$ for which removal lemmas with reasonable bounds were obtained, are such that every graph
satisfying ${\cal P}$ has a small regular partition. So in some sense, a barrier for obtaining
an efficient removal lemma for property ${\cal P}$ was having an efficient regularity lemma
for graphs satisfying ${\cal P}$.

In this paper we consider the property of being induced $C_4$-free, which also suffers from the
fact that a graph might satisfy this property but still have only regular partitions of tower-type size.
By developing a new approach for this problem we manage to overcome this barrier and thus obtain a merely exponential bound
for the induced $C_4$ removal lemma. We thus obtain the first efficient removal lemma that does not rely on an efficient version of the regularity lemma.
This is the first substantial progress on a problem raised by Alon in 2001,
and more recently by Alon, Conlon and Fox.

\end{abstract}

\section{Introduction}\label{sec:intro}

An $n$-vertex graph is $\varepsilon$-far from satisfying a property ${\cal P}$ if one should add/delete at least
$\varepsilon n^2$ edges in order to turn $G$ into a graph satisfying ${\cal P}$.
The so called {\em induced removal lemma} of Alon, Fischer, Krivelevich and Szegedy \cite{AFKS} states
that for every fixed graph $H$, if an $n$-vertex graph $G$ is $\varepsilon$-far from being
induced $H$-free, then $G$ contains at least $n^h/\mbox{Rem}_H(\varepsilon)$ induced copies of $H$, where $h=|V(H)|$ and
$\mbox{Rem}_H(\varepsilon)$ depends only on $\varepsilon$. The proof of this lemma in \cite{AFKS} supplied extremely weak
bounds for $\mbox{Rem}_H(\varepsilon)$, which were later improved
by Conlon and Fox \cite{ConlonFo12}. However, even these improved bounds are of tower-type\footnote{We use $\mbox{tower(x)}$ for a tower of exponents of height $x$, so $\mbox{tower(3)}=2^{2^2}$.
The original proof of the induced removal lemma in \cite{AFKS} gave only wowzer-type bounds, where wowzer is the iterated-tower function.}.

Alon \cite{subgraphs} asked for which graphs $H$ we have $\mbox{Rem}_H(\varepsilon)=\poly(1/\varepsilon)$, that is, for which graphs $H$
can we obtain polynomial bounds for the induced removal lemma. This question was
addressed by Alon and the second author \cite{induced} who resolved this problem for all
graphs $H$ save for $P_4$ (the path on $4$ vertices) and $C_4$ (the $4$-cycle).
The former case was recently solved by Alon and Fox \cite{AF}, who proved that $\mbox{Rem}_{P_4}(\varepsilon)=\poly(1/\varepsilon)$.
They further asked to determine if $\mbox{Rem}_{C_4}(\varepsilon)=\poly(1/\varepsilon)$. This problem was also later raised by Conlon
and Fox \cite{ConlonFo13}.

Prior to this work the best bound for $\mbox{Rem}_{C_4}(\varepsilon)$ was the same tower-type bound that holds
for all graphs $H$. As we explain in the next subsection, the reason is that this problem seemed to lie just
outside the realm of the known techniques for proving efficient bounds for graph removal lemmas.
Our main result in this paper makes the first substantial progress on this problem, by improving the tower-type bound into
an exponential one.

\begin{theorem}\label{thm:c4}
If an $n$-vertex graph $G$ is $\varepsilon$-far from being induced $C_4$-free, then $G$ contains at least $n^4/2^{(1/\varepsilon)^c}$ induced copies of $C_4$, where $c$ is an absolute constant.
\end{theorem}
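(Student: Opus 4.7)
I propose to prove Theorem~\ref{thm:c4} by contrapositive: if an $n$-vertex graph $G$ contains fewer than $n^4/2^{(1/\varepsilon)^c}$ induced $C_4$'s, then $G$ can be transformed into an induced $C_4$-free graph by modifying at most $\varepsilon n^2$ edges. The approach I propose avoids the regularity lemma entirely and instead exploits a structural characterization specific to induced $C_4$-freeness.

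The starting point is the observation that a graph $H$ is induced $C_4$-free if and only if, for every non-edge $\{u,v\}$, the common neighborhood $N_H(u)\cap N_H(v)$ induces a clique. Dually, by the self-complementarity of $C_4$, for every edge $\{u,v\}$ the set $V\setminus(N_H[u]\cup N_H[v])$ is independent. Consequently, the number of induced $C_4$'s in $G$ equals $\tfrac{1}{2}\sum_{\{u,v\}\notin E(G)} e_{\overline{G}}\bigl(N(u)\cap N(v)\bigr)$, since each induced $C_4$ is counted once from each of its two diagonal non-edges.

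I plan to design an iterative local-repair procedure with parameters $T$ and $k$ polynomial in $1/\varepsilon$. In each round, identify a ``heavy'' non-edge whose common neighborhood contains at least $T$ non-edges, or (dually) a heavy edge, and flip a carefully chosen offending pair so as to destroy many induced $C_4$'s simultaneously. The key tool will be a potential function---likely a convex combination of higher moments of $e_{\overline{G}}(N(u)\cap N(v))$ across non-edges and its self-complementary dual---whose decrease per round outweighs any newly-created induced $C_4$'s. After $k$ rounds no heavy pair remains on either side, so the residual induced $C_4$ count is at most $\binom{n}{2}\cdot T$; a final cleanup phase, perhaps via an edge-cover argument on the bipartite ``conflict graph'' whose vertices are the non-edges of $G$ and whose edges are pairs forming an induced $C_4$, finishes within the $\varepsilon n^2$ budget.

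The main obstacle is the per-round local analysis. Each flip destroys the at least $T$ induced $C_4$'s attached to the heavy pair but may also create new $C_4$'s that contain the flipped pair in one of a handful of four-vertex patterns. Bounding these created $C_4$'s against the destroyed ones, using both the structural identity above and its self-complementary dual, is where I expect the bulk of the technical work to lie; a successful analysis should pin down both the right potential function and the constant $c$ in the final bound $2^{(1/\varepsilon)^c}$. A natural fallback, if the per-round bookkeeping proves too delicate, is to stratify the non-edges by their defect $e_{\overline{G}}(N(u)\cap N(v))$ and repair stratum by stratum, trading a larger $c$ for a simpler argument.
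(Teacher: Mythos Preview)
Your proposal has a factual error and a structural gap that together make the outlined strategy unviable as stated.

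First, the error: $C_4$ is \emph{not} self-complementary. Its complement is $2K_2$ (two disjoint edges), so your ``dual'' characterization is false. For example, the graph $2K_2$ itself is induced $C_4$-free, yet for the edge $\{u,v\}$ in one component the set $V\setminus(N[u]\cup N[v])$ is the other edge, which is not independent. You may be thinking of $P_4$, which is self-complementary. Without a genuine dual, you only control one side (non-edges with many non-edges in their common neighbourhood), and there is no symmetric handle on the ``edge'' side.

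Second, the cleanup phase is circular. After your iterative phase you arrive at a graph with at most $\binom{n}{2}\cdot T$ induced $C_4$'s, and you then need to make it induced $C_4$-free by flipping $O(\varepsilon n^2)$ pairs. But ``few induced $C_4$'s $\Rightarrow$ close to induced $C_4$-free'' is precisely the removal lemma you are trying to prove; the vague ``edge-cover on the conflict graph'' does not help, because flipping a pair to kill one induced $C_4$ can create new ones (this is the essential difficulty with induced subgraphs and is exactly why generic local-repair/potential arguments have never yielded sub-tower bounds even for the non-induced triangle removal lemma). Nothing in your outline explains why the residual instance after phase one is qualitatively easier.

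The paper takes a completely different route: it proves a \emph{structure theorem} (Lemmas~\ref{lemma:main} and~\ref{lemma:cond_regularity}) showing that any graph with few induced $C_4$'s is close to a disjoint union of near-cliques $X_1,\dots,X_k$ (with $k=\mathrm{poly}(1/\varepsilon)$) together with a sparse set $Y$, where the bipartite graphs between the $X_i$'s are close to induced-$M_2$-free. The near-cliques are extracted via the Gy\'arf\'as--Hubenko--Solymosi theorem on large cliques in induced-$C_4$-free graphs combined with the Goldreich--Goldwasser--Ron clique tester; the $M_2$-free structure then yields a small $\delta$-homogeneous partition of $X$. The genuinely unstructured part---edges between $X$ and the near-independent set $Y$---is handled by a separate, direct counting argument (Lemma~\ref{lemma:ind_set}) using maximal anti-matchings. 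It is this global structural decomposition, not any local repair, that produces the exponential bound; your proposal does not engage with any of these ingredients.
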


We conjecture that the exponential bound in Theorem \ref{thm:c4} can be further improved to a polynomial one.

Given a (possibly infinite) family of graphs ${\cal F}$, we say that a graph is induced ${\cal F}$-free if it is induced
$H$-free for every $H \in {\cal F}$.
%Observe that for infinite families ${\cal F}$ it is not a priori clear that if $G$ is $\varepsilon$-far from being induced ${\cal F}$-free
%that $G$ should contain {\em any} constant size (that might depend on $\varepsilon$) subgraph that is not induced ${\cal F}$-free.
Observe that for infinite families ${\cal F}$ it is not a priori clear that a graph which is $\varepsilon$-far from being induced ${\cal F}$-free
should contain {\em any} constant size (that might depend on $\varepsilon$) subgraph that is not induced ${\cal F}$-free.
Such a result was obtained by Alon and the second author~\cite{hereditary}, who extended the result of \cite{AFKS} by showing
that for every family of graphs ${\cal F}$, there is a function $\mbox{Rem}_{\cal F}(\varepsilon)$, so that if $G$ is $\varepsilon$-far from
being induced ${\cal F}$-free, then a random subset of $\mbox{Rem}_{\cal F}(\varepsilon)$ vertices from $V(G)$ is not induced ${\cal F}$-free
with probability at least (say\footnote{It is easy to see that if ${\cal F}=\{H\}$, this way of defining $\mbox{Rem}_{\cal F}(\varepsilon)$ is equivalent (up to polynomial factors) to the induced removal lemma of \cite{AFKS}, as we stated it above.}) $2/3$. Needless to say that as in \cite{AFKS}, the bounds for $\mbox{Rem}_{\cal F}(\varepsilon)$ given by \cite{hereditary} are also (at least) of tower-type.

It is natural to ask if Theorem \ref{thm:c4} can be extended to properties defined by forbidding a family of graphs ${\cal F}$, one of which is $C_4$.
The most notable and natural example is the property of being {\em chordal}, which is the property of not containing an induced cycle of length at least
$4$. Previously, the best bound for this problem was the tower-type bound which follows from the general result of~\cite{hereditary}. Here we obtain the following improved bound.

\begin{theorem}\label{thm:chordal}
If an $n$-vertex graph $G$ is $\varepsilon$-far from being chordal, then for some $4 \leq \ell \leq O(\varepsilon^{-18})$,
$G$ contains at least $n^{\ell}/2^{(1/\varepsilon)^c}$ induced copies of $C_{\ell}$,
where $c$ is an absolute constant.
\end{theorem}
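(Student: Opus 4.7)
The plan is a dichotomy via Theorem \ref{thm:c4}: apply Theorem \ref{thm:c4} with parameter $\varepsilon/2$ to $G$. If $G$ is $(\varepsilon/2)$-far from being induced $C_4$-free, we are done with $\ell = 4$, obtaining at least $n^4/2^{(2/\varepsilon)^c}$ induced copies of $C_4$. Otherwise there exists a graph $G'$ on $V(G)$ with $|E(G) \triangle E(G')| \leq \varepsilon n^2/2$ that is induced $C_4$-free; since $G$ is $\varepsilon$-far from chordal, $G'$ is at least $(\varepsilon/2)$-far from chordal, and in particular every induced cycle of $G'$ has length at least $5$.

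The crux of the proof is then a structural statement about induced $C_4$-free graphs: if $G'$ is induced $C_4$-free and $\delta$-far from chordal, then $G'$ contains many induced copies of some $C_\ell$ with $4 \leq \ell \leq O(\delta^{-18})$. To establish this I would exploit the restrictive local structure forced by induced $C_4$-freeness -- most importantly, the fact that any two non-adjacent vertices in such a graph have a clique as common neighborhood, so short induced walks are tightly constrained. Because $G'$ is $\delta$-far from chordal it must contain many induced (necessarily long) cycles. I would then attempt to iteratively shortcut these cycles using the rigid $C_4$-free local geometry, with a density-increment or pigeonhole amplification showing that the process stabilizes at length $O(\delta^{-18})$ with many copies produced. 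The specific exponent $18$ presumably arises from composing several polynomial dependencies during the reduction.

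Finally, to transfer the induced $C_\ell$ copies from $G'$ back to $G$: each of the at most $\varepsilon n^2/2$ modified pairs lies in at most $O(n^{\ell-2})$ induced copies of $C_\ell$ in $G'$, so at most $O(\varepsilon n^\ell)$ copies in $G'$ fail to remain induced copies in $G$. As long as the structural statement produces sufficiently more copies than this threshold, at least $n^\ell/2^{(1/\varepsilon)^c}$ persist in $G$. I expect the main obstacle to be precisely this structural statement: calibrating the shortcutting argument so that one simultaneously controls the cycle length by $O(\varepsilon^{-18})$ and produces enough $C_\ell$ copies in $G'$ for the transfer to retain the required density in $G$ -- if the natural construction of $G'$ concentrates too many edits in edges lying on short induced cycles, one might additionally need to choose the induced $C_4$-free approximation $G'$ more carefully, e.g.\ so that the modifications are localized to a small vertex set and avoid destroying the short cycles produced by the structural argument.
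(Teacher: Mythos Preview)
Your dichotomy via Theorem~\ref{thm:c4} is a natural idea, but it differs from the paper's route and, as written, has a real quantitative gap in the transfer step.

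The paper does \emph{not} first pass to an induced $C_4$-free approximation. Instead it applies the structural Lemma~\ref{lemma:cond_regularity} directly to $G$, obtaining a partition $X_1\cup\cdots\cup X_k\cup Y$ with $k\le 10\alpha^{-3}$ (where $\alpha=\Theta(\varepsilon^6)$), together with an auxiliary graph $G'$ and ``witness'' sets $W_i$. They then build $G''$ by homogenizing the pairs $(Q_i,Q_j)$ according to the $W_i$'s. If $G''[X\setminus Z]$ is not chordal, any induced cycle there meets each clique $X_i$ in at most two vertices, so $\ell\le 2k\le 20\alpha^{-3}=O(\varepsilon^{-18})$; this is exactly where the exponent $18$ comes from, not from any shortcutting of long cycles. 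The many copies are then read off from products of the $W_i$'s. If $G''[X]$ \emph{is} chordal, Lemma~\ref{lemma:ind_set} (applied with $\mathcal F=\{C_\ell:\ell\ge 4\}$) produces many induced $C_4$'s directly.

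Your plan founders on the transfer. You take $G'$ with $|E(G)\triangle E(G')|\le \varepsilon n^2/2$, so passing back to $G$ destroys up to $\Theta(\varepsilon n^{\ell})$ induced $C_\ell$'s. But any argument based on the structure of induced $C_4$-free graphs (in particular the paper's machinery) only manufactures an \emph{exponentially small} fraction of $n^{\ell}$ copies, of order $2^{-\mathrm{poly}(1/\varepsilon)}n^{\ell}$, because the witness sets $W_i$ have size $(\alpha/20)^{\Theta(\alpha^{-6})}n$. This is far below $\varepsilon n^{\ell}$, so after the transfer nothing survives. The paper avoids this precisely by arranging that the relevant edge-difference is $\gamma n^2$ with $\gamma$ chosen exponentially small (Item~5 of Lemma~\ref{lemma:cond_regularity}), matching the number of copies produced. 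You cannot achieve that by invoking Theorem~\ref{thm:c4} with a parameter $\gamma$ that small, since the $C_4$-count it yields would then be only $n^4/2^{(1/\gamma)^c}$, i.e.\ doubly exponential in $1/\varepsilon$. Separately, your ``iteratively shortcut long cycles using $C_4$-free local geometry'' is not a proof sketch: $C_n$ itself is induced $C_4$-free, so there is no local mechanism that shortens an individual induced cycle; the $O(\varepsilon^{-18})$ bound really comes from the global clique decomposition.
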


While Theorem \ref{thm:chordal} asserts that if $G$ is $\varepsilon$-far from being chordal it must contain an induced
cycle of length $\mbox{poly}(1/\varepsilon)$, it only implies that a sample of vertices of size $2^{(1/\varepsilon)^c}$
contains an induced cycle with probability at least $2/3$. We do believe, however, that this exponential bound can be
further improved to a polynomial one.

It is now natural to ask if Theorem \ref{thm:chordal} can be further extended to an arbitrary family of graphs ${\cal F}$, one of which is $C_4$.
As our final theorem shows, this is not the case in a very strong sense.

\begin{theorem}\label{thm:hard0}
For every (decreasing) function $g\colon(0,1/2) \rightarrow \mathbb{N}$ there is a family of graphs ${\cal F}={\cal F}(g)$
so that $C_4 \in {\cal F}$ and yet $\mathrm{Rem}_{\cal F}(\varepsilon) \geq g(\varepsilon)$.

In fact, for every (small enough) $\varepsilon >0$ and every $n \geq n_0(\varepsilon)$, there is an $n$-vertex graph $G$ which is $\varepsilon$-far from being
induced ${\cal F}$-free, and yet does not contain an induced copy of {\bf any} $F \in {\cal F}$ on fewer than $g(\varepsilon)$ vertices.
\end{theorem}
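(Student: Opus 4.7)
The plan is to adapt the Alon--Shapira-style probabilistic construction of hereditary graph properties with arbitrarily bad induced removal bounds, tailored so that $C_4 \in \mathcal{F}$.

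Given $g$, I will choose a rapidly growing sequence of integers $m_1 < m_2 < \cdots$ (depending on $g$) and, for each $i$, a specifically constructed induced $C_4$-free ``prototype'' graph $P_i$ on $m_i$ vertices. Each $P_i$ is designed to be \emph{universal} for small induced subgraphs within a suitably restricted subclass of induced $C_4$-free graphs: for some sequence $k_i \to \infty$, every graph in the subclass on at most $k_i$ vertices appears as an induced subgraph of $P_i$. I then define $\mathcal{F}$ to be the family of all finite graphs not isomorphic to any induced subgraph of any $P_i$. Since each $P_i$ is induced $C_4$-free, $C_4 \in \mathcal{F}$, and the property of being induced $\mathcal{F}$-free becomes equivalent to being isomorphic to an induced subgraph of some $P_j$.

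For each small $\varepsilon > 0$ and every $n \geq n_0(\varepsilon)$, the witness graph $G = G_\varepsilon$ is constructed as an $n$-vertex induced $C_4$-free graph (drawn, for instance, from a distribution within the chosen subclass) whose small induced subgraphs lie in the hereditary closure of $\{P_i\}$, but whose global structure differs substantially from every $P_j$. The ``no small forbidden subgraph'' condition is then immediate: choosing $i(\varepsilon)$ so that $k_{i(\varepsilon)} \geq g(\varepsilon)$, every induced subgraph of $G$ on fewer than $g(\varepsilon)$ vertices belongs to the subclass and has at most $k_{i(\varepsilon)}$ vertices, hence embeds into $P_{i(\varepsilon)}$ and is not in $\mathcal{F}$.

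The main obstacle is establishing the $\varepsilon$-farness: that $G$ lies at edit distance at least $\varepsilon n^2$ from every induced subgraph of every $P_j$. The delicate technical point is a tension between two competing requirements on the sequence $\{P_i\}$: each $P_i$ must be rich enough in small induced subgraphs (for the universality step above), yet the whole sequence must be sparse enough in the space of $n$-vertex graphs that a ``generic'' $G$ is not covered and genuinely far from the closure. This balance is achieved by the careful choice of the restricted subclass of induced $C_4$-free graphs into which the $P_i$'s are embedded, together with the growth rate of $(m_i)$; a probabilistic union bound over indices $j$ (truncated thanks to the fast growth of $(m_i)$) and over vertex injections $[n] \hookrightarrow V(P_j)$ then finishes the argument. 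Pinning down the correct growth rate of $(m_i)$ in terms of $g$ is the quantitative heart of the proof.
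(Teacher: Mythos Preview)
Your proposal has a genuine gap. The tension you yourself flag between universality of the $P_i$'s and farness of $G$ is fatal as written: you stipulate that each $P_i$ contains as an induced subgraph \emph{every} graph in the chosen subclass on at most $k_i$ vertices, with $k_i \to \infty$, and that the witness $G$ is itself drawn from this same subclass. But then for any fixed $n$ there is some $j$ with $k_j \geq n$, so $G$ embeds into $P_j$ and is therefore induced $\mathcal{F}$-free outright --- it is $0$-far, not $\varepsilon$-far. This obstruction is deterministic; no growth rate on $(m_i)$ and no probabilistic union bound over $j$ touches it. The Alon--Shapira framework you invoke does not work by making the $P_i$'s universal for a hereditary subclass that also contains $G$; transplanting the slogan into the induced-$C_4$-free world in the way you describe cannot succeed. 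Beyond this structural problem, nothing concrete is specified --- not the subclass, not the $P_i$'s, not $G$, not the farness argument --- so there is no proof here, only an outline whose central step is broken.

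The paper's proof is entirely different and fully explicit. Set $\varepsilon_k = 1/(2k^2)$, $f_k = g(\varepsilon_k)$, and take
\[
\mathcal{F} \;=\; \{C_4\} \cup \{B_{k,f_k} : k \geq 5\},
\]
where $B_{k,f}$ is the graph obtained from $C_k$ by replacing each vertex by a clique of size $f$ and each edge by a complete bipartite graph. The witness graph is simply $G = B_{k,n/k}$. A direct check shows $G$ is induced $C_\ell$-free for every $4 \leq \ell < k$, so the smallest member of $\mathcal{F}$ that $G$ contains is $B_{k,f_k}$, which has $k\,g(\varepsilon_k) \geq g(\varepsilon_k)$ vertices. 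Farness is proved by counting: any $G'$ within edit distance $\varepsilon_k n^2$ of $G$ still has at least $\tfrac{1}{2}(n/k)^k$ induced $k$-cycles with one vertex in each part $V_i$; viewing these as edges of a $k$-partite $k$-uniform hypergraph and applying Erd\H{o}s's theorem yields sets $U_i \subseteq V_i$ of size $f_k$ that are pairwise complete or empty in the correct cyclic pattern, and then either all $U_i$ are cliques (giving an induced $B_{k,f_k}$) or some non-edge inside a $U_i$ together with neighbours in the adjacent parts gives an induced $C_4$.
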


\subsection{Relation to prior works}\label{subsec:long}

In this subsection we would like to explain why in Theorem \ref{thm:c4} we managed
to overcome for the first time a natural barrier, which was the main reason why one
could not derive Theorem \ref{thm:c4} via techniques that were previously used for
proving graph removal lemmas. For simplicity we will start by discussing the triangle removal lemma,
that is, the special case of the induced removal lemma when $H=K_3$. The original proof
of the triangle removal lemma \cite{RuzsaSz76} relied on the famous regularity lemma of Szemer\'edi \cite{Szemeredi78},
which is one of the most powerful tools for tackling problems
in extremal graph theory. It states that for every $\varepsilon >0$ there is an $M=M(\varepsilon)$ so that every
graph has an $\varepsilon$-regular partition of order at most $M$ (see \cite{RodlSh} for the precise definitions related to graph regularity).
Since Szemer\'edi's proof only established that $M(\varepsilon) \leq \mbox{tower}(1/\varepsilon^5)$, this approach for proving the triangle
removal lemma only gave the very weak bound $\mbox{Rem}_{K_3}(\varepsilon) \leq \mbox{tower}(\mbox{poly}(1/\varepsilon))$.
Gowers' celebrated result \cite{Gowers}, which states that $M(\varepsilon) \geq \mbox{tower}(\mbox{poly}(1/\varepsilon))$, implies
that one cannot get a better bound for $\mbox{Rem}_{K_3}(\varepsilon)$ via the regularity lemma.
In a major breakthrough, Fox \cite{Fox11} managed to prove the triangle removal lemma
while avoiding Szemer\'edi's version of the regularity lemma, thus showing that $\mbox{Rem}_{K_3}(\varepsilon) \leq \mbox{tower}(O(\log 1/\varepsilon))$.
A different formulation of his proof was later given in \cite{ConlonFo13} and \cite{MoshkovitzSh16}. The latter proof
shows that Fox's result can be derived from a variant of the regularity lemma. Unfortunately,
it was shown in \cite{MoshkovitzSh16} that this variant of the regularity lemma must also produce
partitions of tower-type size. Hence this approach does not seem to allow one to prove (say) exponential bounds for
the triangle removal lemma.

Although the best known bounds for the triangle removal are of tower-type, there are families of graphs
${\cal F}$ for which one {\em can} prove much better (non-tower-type) bounds for $\mbox{Rem}_{\cal F}(\varepsilon)$, that is, for
the removal lemma of induced ${\cal F}$-freeness. One example is the result of Alon and Fox \cite{AF} mentioned above regarding induced $P_4$-freeness.
The main point we would like to make is that
all these improved bounds (save for one case discussed below) were not obtained by avoiding the regularity lemma.
Instead, they still (implicitly or explicitly) used the regularity lemma, but relied on the fact that induced ${\cal F}$-free
graphs have much smaller $\varepsilon$-regular partitions. For example, the result of Alon and Fox \cite{AF} regarding induced $P_4$-freeness
can be derived from the fact that every induced $P_4$-free graph has an $\varepsilon$-regular partition of size $\mbox{poly}(1/\varepsilon)$.
See \cite{GS} for a proof of this and other related results.

It is now natural to ask if one can use the above approach in order to obtain better bounds for the triangle removal lemma.
Unfortunately, there are bipartite versions of Gowers' \cite{Gowers} lower bound for the regularity lemma, as well as for
the variant of the regularity lemma introduced in \cite{MoshkovitzSh16}. Therefore, a graph can be triangle-free but still only have
regular partitions of tower-type size. This means that any proof of the triangle removal lemma that relies on (one of the above versions of) the regularity lemma is bound to produce tower-type bounds.

With regard to induced $C_4$-freeness, it is easy to see that every split graph is induced $C_4$-free, where a split
graph is a graph whose vertex set can be partitioned into two sets, one inducing a complete graph and the other an independent set. 
This means that if we take a bipartite version of Gowers' lower bound \cite{Gowers} (or of the one from \cite{MoshkovitzSh16}),
and put a complete graph on one of the vertex sets, we get an induced $C_4$-free graph that has only regular partitions of tower-type size.
In particular, arguments similar to those that were previously used in order to devise efficient removal lemmas cannot
give better-than-tower-type bounds for this problem.

Summarizing the above discussion, Theorem \ref{thm:c4} is the first example
showing that one {\em can} obtain an efficient removal lemma for a property ${\cal P}$,
even though graphs satisfying ${\cal P}$ might have only regular partitions of tower-type size.
To do this, our proof is the first removal lemma that avoids using the regularity lemma or one of its variants (save for the example discussed below).
We are hopeful that bounds similar to those obtained in Theorem \ref{thm:c4} can
be obtained for removal lemmas of other properties for which the best known bounds are of tower-type, most notably for triangle freeness.

%\begin{remark}

Let us end the discussion by describing the only previous example of a removal lemma that was obtained while avoiding
a regularity lemma, and how it differs from Theorem \ref{thm:c4}.
In 1984 Erd\H{o}s \cite{Erdos84} (implicitly) conjectured that $k$-colorability has a removal lemma, that is,
that if $G$ is $\varepsilon$-far from being $k$-colorable then a sample of $C_k(\varepsilon)$ vertices from $V(G)$ spans a non-$k$-colorable subgraph with probability at least $2/3$. This was first verified by R\"odl and Duke \cite{RD} who used the regularity lemma in order to obtain 
a tower-type bound for $C_k(\varepsilon)$.
This tower-type bound was dramatically improved by Goldreich, Goldwasser and Ron \cite{GGR},
who obtained a new proof of this result (as well as for similar {\em partition problems}) that avoided the regularity lemma
and thus gave a polynomial bound for $C_k(\varepsilon)$. Let us try\footnote{See also Subsection 8.3.2 of Goldreich's upcoming book \cite{Goldreich} for a similar attempt.} to explain why $k$-colorability differs from triangle-freeness or
induced $C_4$-freeness. First, as opposed to
these two properties which are {\em local}, the partition properties of \cite{GGR} are {\em global}.
Perhaps the best way to see this is from the perspective of graph homomorphisms: triangle-freeness means that there is no edge-preserving mapping
from the vertices of the triangle to the vertices of $G$, while $3$-colorability means that there is such a mapping from the vertices of
$G$ to the vertices of the triangle\footnote{In the language of graph limits, this is the distinction between left and right homomorphisms, see \cite{Lovasz}.}. The second difference, which is more important for our quantitative investigation here, is that 
$k$-colorability is defined using global edge counts (i.e. having no edges inside a vertex partition into $k$ sets). 
This can explain (at least in hindsight), why one does not need any structure theorem in order to handle this property. 
Instead one can rely on sampling arguments that boil down to estimating various edge densities (this is not to say that devising such proofs is an easy task!). It appears that arguments of this sort cannot be used to prove removal lemmas for local properties
such as triangle freeness or induced $C_4$-freeness.

%\end{remark}

\subsection{Paper overview}
The main idea of the proof is to show that (very roughly speaking) every induced $C_4$-free graph is a split graph.
To be more precise, every\footnote{It is known \cite{Steger} that {\em most} induced $C_4$-free graphs are split graphs. We stress that in our setting
we have to deal with {\em every} induced $C_4$-free graph, not just typical ones!} induced $C_4$-free graph is close to being a union of an independent set and few cliques, so that
the bipartite graphs between these cliques are highly structured. Note that we have no guarantee on the structure
of the bipartite graph connecting the independent set and the cliques\footnote{This unstructured part is unavoidable
due to the example we mentioned earlier of putting Gowers' construction between a clique and an independent set.}.
Towards this goal, in Section \ref{sec:prelim} we describe some preliminary lemmas, mostly regarding the structure
of bipartite graphs that do not contain an induced matching of size $2$.
In Section \ref{sec:structure} we give the main partial structure theorem, stated as Lemma \ref{lemma:main}.
In the course of the proof we will make a surprising application of the main result of
Goldreich, Goldwasser and Ron \cite{GGR}. In Section \ref{sec:main} we give the proofs of Theorems \ref{thm:c4} and \ref{thm:chordal}.
We will make use of the structure theorem from Section \ref{sec:structure} but will also have to deal
with the (unavoidable) {\em unstructured} part of the graph. This will be done in Lemma \ref{lemma:ind_set}.
Finally, in Section \ref{sec:example}, we give the proof of Theorem \ref{thm:hard0}.
We will make no effort to optimize the constant $c$ appearing in Theorems \ref{thm:c4} and \ref{thm:chordal}.

\section{Forbidding an induced $2$-matching}\label{sec:prelim}

Our goal in this section is to introduce several definitions and prove Lemma \ref{lemma:homog_part_strong} stated below,
regarding graphs not containing induced matchings of size $2$ of a specific type, which we now formally define.
Let $G$ be a graph and let $X,Y \subseteq V(G)$ be disjoint sets of vertices.
%We say that $(X,Y)$ is {\em homogeneous} if the bipartite graph between $X$ and $Y$ is either complete or empty.
An {\em induced copy of $M_2$} in $(X,Y)$ is an (unordered) quadruple $x,x',y,y'$ such that $x,x' \in X$, $y,y' \in Y$, $(x,y),(x',y') \in E(G)$ and $(x,y'),(x',y) \notin E(G)$. We say that $(X,Y)$ is {\em induced $M_2$-free} if it does not contain induced copies of $M_2$ as above.
%We say that $(X,Y)$ is {\em $\varepsilon$-far} from being induced $M_2$-free if one has to add/delete at least $\varepsilon|X||Y|$ of the edges between $X$ and $Y$ to make $(X,Y)$ induced $M_2$-free. (????where is the best place to say this???? Moved this to before Lemma \ref{lemma:M2_removal})
Observe that if $X$ and $Y$ are cliques then $G[X \cup Y]$ is induced $C_4$-free if and only if $(X,Y)$ is induced $M_2$-free. For $x \in X$, we denote $N_Y(x) = \{y \in Y : (x,y) \in E(G)\}$.

\begin{claim}\label{prop:nested}
$(X,Y)$ is induced $M_2$-free if and only if there is an enumeration $x_1,\dots,x_m$ of the elements of $X$ such that
$N_Y(x_i) \subseteq N_Y(x_j)$ for every $1 \leq i < j \leq m$.
\end{claim}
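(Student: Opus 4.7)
The plan is to prove the two directions separately, noting that this is an elementary structural equivalence with no real obstruction; the only mild point of care is turning a "totally ordered by inclusion" statement into an explicit enumeration.

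For the \emph{if} direction, I would suppose such an enumeration $x_1,\dots,x_m$ exists, and show directly that no induced $M_2$ can appear in $(X,Y)$. If $x_i,x_j \in X$ with $i<j$ formed part of an induced $M_2$ with vertices $y,y' \in Y$, then in particular one of the edge patterns would have some $y \in Y$ with $x_i \sim y$ but $x_j \not\sim y$ (or the symmetric pattern). This contradicts $N_Y(x_i) \subseteq N_Y(x_j)$, which is the content of the assumed enumeration.

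For the \emph{only if} direction, I would argue by contrapositive on pairs. Suppose $x,x' \in X$ have incomparable $Y$-neighborhoods, so that one can pick $y \in N_Y(x) \setminus N_Y(x')$ and $y' \in N_Y(x') \setminus N_Y(x)$. Then by construction $x \sim y$, $x' \sim y'$, $x \not\sim y'$ and $x' \not\sim y$, i.e. the quadruple $x,x',y,y'$ is an induced copy of $M_2$ in $(X,Y)$. Hence induced $M_2$-freeness forces the family $\{N_Y(x) : x \in X\}$ to be totally ordered by inclusion.

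It then remains to produce the enumeration itself. I would simply order the elements of $X$ so that $|N_Y(x_1)| \leq |N_Y(x_2)| \leq \dots \leq |N_Y(x_m)|$. Given $i<j$, the neighborhoods $N_Y(x_i)$ and $N_Y(x_j)$ are comparable by the previous step, and if $N_Y(x_j) \subsetneq N_Y(x_i)$ held we would get $|N_Y(x_j)| < |N_Y(x_i)|$, contradicting the choice of ordering; hence $N_Y(x_i) \subseteq N_Y(x_j)$, as required. The only thing to be a little careful about is this last conversion from "chain" to "explicit enumeration" when two vertices have equal-sized neighborhoods, but in that case comparability forces equality of the neighborhoods themselves, so any tie-breaking works.
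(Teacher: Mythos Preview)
Your proof is correct and follows essentially the same approach as the paper: both directions hinge on the observation that an induced $M_2$ exists precisely when two vertices of $X$ have incomparable $Y$-neighbourhoods, and then one linearly orders $X$ accordingly. The only cosmetic difference is that the paper phrases the enumeration step as ``the inclusion poset on $\{N_Y(x):x\in X\}$ is a linear order, so enumerate from minimal to maximal,'' whereas you sort by $|N_Y(x)|$ and handle ties explicitly; your version is arguably a touch more careful about the case of equal neighbourhoods.
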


\begin{proof}
Observe that $(X,Y)$ contains an induced $M_2$ if and only if there are $x,x' \in X$ for which there exist $y \in N_Y(x) \setminus N_Y(x')$ and $y' \in N_Y(x') \setminus N_Y(x)$. Therefore,
$(X,Y)$ is induced $M_2$-free if and only if for every $x,x' \in X$ it holds that either $N_Y(x) \subseteq N_Y(x')$ or $N_Y(x') \subseteq N_Y(x)$. Consider the poset on $X$ in which $x$ precedes $x'$ if and only if $N_Y(x) \subseteq N_Y(x')$. This poset is a linear ordering. Enumerate the elements of $X$ from minimal to maximal to get the required enumeration.
\end{proof}

We say that $(X,Y)$ is {\em homogeneous} if the bipartite graph between $X$ and $Y$ is either complete or empty. We say that a partition ${\cal P} = \{P_1,\dots,P_r\}$ of a set $V$ is an {\em equipartition} if $\left| |P_i| - |P_j| \right| \leq 1$ for every $1 \leq i,j \leq r$.

\begin{lemma}\label{lemma:homog_pair_part}
If $(X,Y)$ is induced $M_2$-free then for every integer $r \geq 1$ there is an equipartition
$X = X_1 \cup \dots \cup X_r$ and a partition
$Y = Y_1 \cup \dots \cup Y_{r + 1}$ such that $(X_i,Y_j)$ is homogeneous for every $1 \leq i \leq r$ and $1 \leq j \leq r + 1$ satisfying $i \neq j$.
\end{lemma}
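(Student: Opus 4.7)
The plan is to use Claim \ref{prop:nested} to linearly order the vertices of $X$ by inclusion of their neighborhoods, cut this order into $r$ equal consecutive blocks to form the $X_i$'s, and then partition $Y$ according to the ``threshold'' at which each $y\in Y$ first enters the chain of neighborhoods.

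In detail, first invoke Claim \ref{prop:nested} to enumerate $X=\{x_1,\dots,x_m\}$ so that $N_Y(x_1)\subseteq N_Y(x_2)\subseteq\cdots\subseteq N_Y(x_m)$. Choose $0=k_0<k_1<\cdots<k_r=m$ with $|k_i-k_{i-1}|$ differing by at most $1$, and set $X_i=\{x_{k_{i-1}+1},\dots,x_{k_i}\}$. This already gives the desired equipartition of $X$. The key monotonicity fact I will use repeatedly is that for every $x\in X_i$,
\[
B_{i-1} \;\subseteq\; N_Y(x) \;\subseteq\; B_i,
\]
where I define $B_i:=N_Y(x_{k_i})$ for $1\le i\le r$, and $B_0:=\emptyset$, $B_{r+1}:=Y$. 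In particular the sequence $B_0\subseteq B_1\subseteq\cdots\subseteq B_{r+1}$ is a chain.

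Next, define the partition of $Y$ by the ``layers'' of this chain:
\[
Y_j \;:=\; B_j \setminus B_{j-1}, \qquad j=1,\dots,r+1.
\]
So $Y_j$ consists of those $y\in Y$ which are neighbors of $x_{k_j}$ but not of $x_{k_{j-1}}$ (with the appropriate boundary conventions). These sets are pairwise disjoint and cover $Y$.

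Finally, I verify homogeneity for $i\ne j$. If $i<j$, then any $y\in Y_j$ satisfies $y\notin B_{j-1}\supseteq B_i$, hence $y\notin N_Y(x)$ for every $x\in X_i$ (since $N_Y(x)\subseteq B_i$), so $(X_i,Y_j)$ is empty. If $i>j$, then any $y\in Y_j$ lies in $B_j\subseteq B_{i-1}$, and since $N_Y(x)\supseteq B_{i-1}$ for every $x\in X_i$, we get that $y$ is adjacent to every vertex of $X_i$, so $(X_i,Y_j)$ is complete. In either case $(X_i,Y_j)$ is homogeneous, as required.

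There is no real obstacle here: the whole argument is essentially the observation that an induced-$M_2$-free bipartite graph, once its $X$-side is laminated by Claim \ref{prop:nested}, automatically induces a chain decomposition of $Y$, and any partition of $X$ into intervals of this chain is compatible with the corresponding ``level sets'' on the $Y$-side. The only mild point to keep in mind is the presence of $r+1$ (rather than $r$) parts on the $Y$-side, which is needed to accommodate both the bottom layer (vertices with no neighbors in $X$ are not forced, but must be placed in $Y_{r+1}$) and the top layer $Y_1$ (vertices adjacent to all of $X$).
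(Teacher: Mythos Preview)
Your proof is correct and follows essentially the same approach as the paper: enumerate $X$ via Claim~\ref{prop:nested}, cut it into $r$ consecutive intervals, and partition $Y$ into the successive differences $N_Y(x_{k_j})\setminus N_Y(x_{k_{j-1}}})$ (with the obvious boundary conventions). Your use of the notation $B_i=N_Y(x_{k_i})$ and the sandwich $B_{i-1}\subseteq N_Y(x)\subseteq B_i$ for $x\in X_i$ makes the homogeneity verification slightly cleaner than the paper's, but the argument is the same.
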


\begin{proof}
Let $x_1,\dots,x_m$ be the enumeration of the elements of $X$ from Claim \ref{prop:nested}. For $1 \leq i \leq r$ define
$X_i = \{ x_j : \frac{(i-1)m}{r} < j \leq \frac{im}{r} \}$. Here we assume, for simplicity of presentation, that $|X|$ is divisible by $r$; if that is not the case then we partition $X$ into ``consecutive intervals'' of sizes
$\big\lfloor \frac{|X|}{r} \big\rfloor$ and
$\big\lceil \frac{|X|}{r} \big\rceil$.
Let now $y_1,...,y_n$ be an enumeration of the elements of $Y$ with the property that for every $x \in X$, the set $N_Y(x)$ is a ``prefix'' of the enumeration, that is, so that $N_Y(x) = \{y_1,\dots,y_k\}$ for some $0 \leq k \leq n$.
Define $Y_1 = N_Y(x_{m/r})$, $Y_i = N_Y(x_{im/r}) \setminus N_Y(x_{(i-1)m/r})$ for $i = 2,\dots,r$ and
$Y_{r+1} = Y \setminus N_Y(x_m)$.

It remains to show that $(X_i,Y_j)$ is homogeneous for every $i \neq j$. Assume first that $i < j$. Then for every $x \in X_i$ we have
$N_Y(x) \subseteq N_Y(x_{im/r}) \subseteq N_Y(x_{(j-1)m/r})$.
By the definition of $Y_j$ we have $Y_j \cap \nolinebreak N_Y(x_{(j-1)m/r}) = \emptyset$. Thus,
$Y_j \cap N_Y(x) = \emptyset$ for every $x \in X_i$, implying that the bipartite graph $(X_i,Y_j)$ is empty. Now assume that $i > j$. For every $x \in X_i$ we have
$N_Y(x_{jm/r}) \subseteq  \nolinebreak N_Y(x_{(i-1)m/r}) \subseteq \nolinebreak N_Y(x)$. By the definition of $Y_j$ we have
$Y_j \subseteq N_Y(x_{jm/r})$. Thus, $Y_j \subseteq N_Y(x)$ for every
$x \in X_i$, implying that the bipartite graph $(X_i,Y_j)$ is complete.
\end{proof}

For two partitions ${\cal P}_1, {\cal P}_2$ of the same set, we say that ${\cal P}_2$ is a
{\em refinement} of ${\cal P}_1$ if every part of ${\cal P}_2$ is contained in one of the parts of
${\cal P}_1$.
A vertex partition
${\cal P}$ of an $n$-vertex graph $G$ is called {\em $\delta$-homogeneous} if the sum of $|U||V|$ over all non-homogeneous unordered distinct pairs
$U,V \in \mathcal{P}$ is at most $\delta n^2$. It is easy to see that a refinement of a $\delta$-homogeneous partition is itself $\delta$-homogeneous.

\begin{lemma}\label{lemma:reg_part}
Let $\delta > 0$, let $G$ be an $n$-vertex graph and let
$V(G) = X_1 \cup \dots \cup X_k$
be a partition such that $X_1,\dots,X_k$ are cliques and $(X_i,X_j)$ is induced $M_2$-free for every $1 \leq i < j \leq k$. Then there is a $\delta$-homogeneous partition which refines $\{X_1,\dots,X_k\}$ and has at most
$k \left( 2/\delta \right)^{k}$ parts.
\end{lemma}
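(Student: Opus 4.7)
The plan is to apply Lemma \ref{lemma:homog_pair_part} to every pair $(X_i, X_j)$ separately, and then take the common refinement across all pairs. More concretely, I would set $r = \lceil 1/\delta \rceil$ and assume $\delta \leq 1/2$ (otherwise the claimed bound is essentially trivial). For each $1 \leq i < j \leq k$, the pair $(X_i, X_j)$ is induced $M_2$-free, so Lemma \ref{lemma:homog_pair_part} produces an equipartition of $X_i$ into $r$ parts and a partition of $X_j$ into $r+1$ parts such that the only possibly non-homogeneous pairs of parts are the $r$ ``diagonal'' ones.

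I would then define the final partition $\R$ so that, on each $X_i$, it refines all $k-1$ partitions of $X_i$ produced by the step above (there is one such partition for each $j \neq i$: an equipartition into $r$ parts if $j > i$, and a partition into $r+1$ parts if $j < i$). By construction $\R$ refines $\{X_1,\dots,X_k\}$, and each $X_i$ is cut into at most $(r+1)^{k-1}$ parts, giving $|\R| \leq k(r+1)^{k-1}$. Since $r + 1 \leq 1/\delta + 2 \leq 2/\delta$ when $\delta \leq 1/2$, this is at most $k(2/\delta)^k$, as required.

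For $\delta$-homogeneity, observe that refinement preserves homogeneity of homogeneous pairs; hence within $X_i \cup X_j$, the non-homogeneous pairs of $\R$ are sub-pairs of the $r$ diagonal pairs from the application of Lemma \ref{lemma:homog_pair_part} to $(X_i, X_j)$. The equipartition bounds each diagonal's $X_i$-side by $\lceil |X_i|/r \rceil$, so the non-homogeneous contribution inside $X_i \cup X_j$ is at most $\lceil |X_i|/r \rceil \cdot |X_j|$. Summing over $i < j$ yields at most $n^2/(2r) + kn$, which is at most $\delta n^2$ once $n$ is at least a constant multiple of $k/\delta$; for smaller $n$, the partition of $V(G)$ into singletons trivially satisfies both conclusions of the lemma.

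The only thing I expect to require some care is the bookkeeping of constants: verifying that $(r+1)^{k-1}$ stays within $(2/\delta)^k$ for the chosen $r$, and that the additive $kn$ error from the equipartition rounding is absorbed by the main $n^2/(2r)$ term. Both are routine; the genuine content of the proof is already encapsulated in Lemma \ref{lemma:homog_pair_part}, and the rest is a product bound for the common refinement.
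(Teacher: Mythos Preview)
Your proposal is correct and follows essentially the same approach as the paper: apply Lemma \ref{lemma:homog_pair_part} to each pair $(X_i,X_j)$ with $r \approx 1/\delta$, take the common refinement on each $X_i$, and bound the non-homogeneous contribution by the $r$ diagonal pairs in each $(X_i,X_j)$. The only cosmetic difference is that the paper assumes divisibility and gets the clean bound $\sum_p |X_{i,j}^p||X_{j,i}^p| \leq \delta|X_i||X_j|$ directly (summing to $\delta n^2$), whereas you track the $+kn$ rounding error and dispose of small $n$ via the singleton partition; both are fine.
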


\begin{proof}
For every $1 \leq i < j \leq k$, we apply Lemma \ref{lemma:homog_pair_part} to $(X_i,X_j)$ with parameter $r = \frac{1}{\delta}$ to get partitions $\Part_{i,j}$ of $X_i$ and $\Part_{j,i}$ of $X_j$,
$\Part_{i,j} = \{X_{i,j}^{1},...,X_{i,j}^{r}\}$,
$\Part_{j,i} = \{ X_{j,i}^{1},...,X_{j,i}^{r+1} \}$, such that $\Part_{i,j}$ is an equipartition and $(X_{i,j}^p, X_{j,i}^q)$
is homogeneous for every $p \neq q$.
Note that
\begin{equation}\label{eq1}
\sum_{p=1}^{r}{|X_{i,j}^p||X_{j,i}^p|} =
\sum_{p=1}^{r}{\frac{1}{r}|X_i||X_{j,i}^p|} \leq
\frac{1}{r}|X_i||X_j| = \delta |X_i||X_j|.
\end{equation}
For every $i = 1,...,k$, define $\Part_i$ to be the common refinement of the partitions $(\Part_{i,j})_{1 \leq j \leq k, \; j \neq i}$.
We have
$|\Part_i| \leq (r+1)^{k-1} \leq
\left( 2/\delta \right)^{k}$. The partition
$\Part := \bigcup_{i=1}^{k}{\Part_i}$ refines $\{X_1,\dots,X_k\}$ and has at most
$k \left( 2/\delta \right)^{k}$ parts. For every
$U,V \in \mathcal{P}$, if $(U,V)$ is not homogeneous, then there are
$1 \leq i < j \leq k$ and $1 \leq p \leq r$ such that $U \subseteq X_{i,j}^p$ and
$V \subseteq X_{j,i}^p$. This follows from the fact that $X_1,\dots,X_k$ are cliques and the property of the partitions $(\mathcal{P}_{i,j})_{1 \leq i \neq j \leq k}$. By (\ref{eq1}), we have
$$
\sum_{1 \leq i < j \leq k}{\sum_{p=1}^{r}{|X_{i,j}^p||X_{j,i}^p|}} \leq
\delta\sum_{1 \leq i < j \leq k}{|X_i||X_j|} \leq \delta n^2,
$$
implying that $\mathcal{P}$ is $\delta$-homogeneous, as required.
\end{proof}

%We say that $(X,Y)$ is
%{\em $\varepsilon$-far} from being induced $M_2$-free if one has to add/delete at least $\varepsilon|X||Y|$ of the edges between $X$ and $Y$ to make $(X,Y)$ induced $M_2$-free.
%\begin{lemma}\label{lemma:2K2_removal}
%Let $G = (X \cup Y, E)$ be a bipartite graph, $|X| = |Y| = n$. Suppose that one must change at least $\varepsilon n^2$ of the edges/non-edges between $X$ and $Y$ to remove all copies of $2K_2$. Then $G$ contains at least
%$\varepsilon^d n^4$ copies of $2K_2$.
%\end{lemma}

%Observe that if $X$ and $Y$ are cliques then $G[X \cup Y]$ is induced $C_4$-free if and only if $(X,Y)$ is induced $M_2$-free.

\begin{lemma}\label{lemma:homog_part_strong}
Let $\delta > 0$, let $G$ be an $n$-vertex graph and let
$V(G) = X_1 \cup \dots \cup X_k$
be a partition such that $X_1,\dots,X_k$ are cliques and $(X_i,X_j)$ is induced $M_2$-free for every $1 \leq i < j \leq k$.
Then there is a set $Z \subseteq V(G)$ of size $|Z| < \delta n$, a partition
$V(G) \setminus Z = Q_1 \cup \dots \cup Q_q$ which refines
$\{X_1 \setminus Z, \dots, X_k \setminus Z\}$ and subsets
$W_i \subseteq Q_i$ such that the following hold.
%Then there is a $\delta$-homogeneous partition $\mathcal{P}$ refining $\{X_1,\dots,X_k\}$, a set of parts $\mathcal{Q} = \{Q_1,\dots,Q_q\} \subseteq \mathcal{P}$ and subsets
%$W_i \subseteq Q_i$ such that the following hold.
\begin{enumerate}
\item The sum of $|Q_i||Q_j|$ over all non-homogeneous pairs $(Q_i,Q_j)$, $1 \leq i < j \leq q$, is at most $\delta n^2$.
%\item The sum $\sum{|Q_i||Q_j|}$, taken over all non-homogeneous pairs $(Q_i,Q_j)$, is at most $\delta n^2$.
\item $|W_i| \geq (\delta/2k)^{10k^2}n$ for every $1 \leq i \leq q$ and $(W_i,W_j)$ is homogeneous for every $1 \leq i < j \leq q$.

\end{enumerate}
\end{lemma}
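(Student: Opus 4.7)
The plan is to first invoke Lemma~\ref{lemma:reg_part} to produce a partition satisfying property (1), and then refine a second time within each non-homogeneous (``diagonal'') sub-pair to carve out the subsets $W_i$ satisfying property (2).

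First, I would apply Lemma~\ref{lemma:reg_part} with parameter $\delta/2$ to obtain a partition $\{Q_1^{(0)},\ldots,Q_{q_0}^{(0)}\}$ of $V(G)$ that refines $\{X_1,\ldots,X_k\}$, with $q_0\le k(4/\delta)^k$ parts and total non-homogeneous pair measure at most $\delta n^2/2$. Small parts---those whose size falls below a threshold of order $(\delta/2k)^{10k}n$---are moved into $Z$; since there are only $q_0$ parts, this adds at most another $\delta n/2$ to $|Z|$, keeping $|Z|<\delta n$. The surviving parts, relabeled $Q_1,\ldots,Q_q$, each have $|Q_i|\ge(\delta/2k)^{10k}n$, and property (1) is inherited from the output of Lemma~\ref{lemma:reg_part}.

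Second, to construct the $W_i$'s I would use the explicit structure produced in the proof of Lemma~\ref{lemma:reg_part}: each non-homogeneous pair $(Q_i,Q_j)$ arises as a diagonal sub-pair $(X_{a,b}^p,X_{b,a}^p)$ from the Lemma~\ref{lemma:homog_pair_part} partition of $(X_a,X_b)$, and each such sub-pair is itself induced $M_2$-free. I would apply Lemma~\ref{lemma:homog_pair_part} a second time to each diagonal sub-pair with a much larger parameter $s=(4k/\delta)^{10k}$, yielding sub-sub-partitions $\{X_{a,b}^{p,1},\ldots,X_{a,b}^{p,s}\}$ and $\{X_{b,a}^{p,1},\ldots,X_{b,a}^{p,s+1}\}$ in which only the ``second-level diagonal'' sub-sub-pairs $(X_{a,b}^{p,q},X_{b,a}^{p,q})$ remain non-homogeneous. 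For each triple $(a,b,p)$ with $a<b$ I then commit to the pair of sub-sub-indices $(1,2)$, defining $W_i\subseteq Q_i$ by keeping only those $v\in Q_i$ that lie in the committed $X$-side sub-sub-part for every $b>a$ and in the committed $Y$-side sub-sub-part for every $b<a$. Non-diagonal pairs are already homogeneous at the first level, while diagonal pairs $(W_i,W_j)$ now live inside $(X_{a,b}^{p,1},X_{b,a}^{p,2})$, which is non-diagonal at the second level and hence homogeneous by Lemma~\ref{lemma:homog_pair_part}.

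The main obstacle is lower-bounding $|W_i|$: a deterministic commitment could potentially leave $W_i$ empty if $Q_i$ is badly concentrated in the non-committed sub-sub-parts. I would handle this by choosing the committed pair of indices uniformly at random over admissible choices (those with distinct $X$- and $Y$-sub-sub-indices), independently for each triple $(a,b,p)$. Each vertex $v\in Q_i$ then survives all $k-1$ restrictions with probability at least $1/s^{k-1}$, so $\mathbb{E}[|W_i|]\ge|Q_i|/s^{k-1}\ge(\delta/2k)^{10k^2}n$, the desired bound. A second-moment/Paley--Zygmund estimate on each $|W_i|$ together with a union bound over the $q$ parts, followed by derandomization via the method of conditional expectations, should then produce a deterministic commitment for which $|W_i|\ge(\delta/2k)^{10k^2}n$ holds simultaneously for all $i$. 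The delicate point is verifying that the $Q_i$'s are sufficiently well-spread inside the second-level sub-sub-parts for the concentration argument to apply; this is where the freedom to choose $s$ as large as $(4k/\delta)^{10k}$ is used, and where the exponent $10k^2$ in the final bound arises naturally as $(k-1)\cdot 10k$.
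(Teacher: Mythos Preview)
Your overall strategy---first apply Lemma~\ref{lemma:reg_part} to get the partition $\{Q_i\}$, then refine inside each diagonal pair to extract homogeneous witnesses $W_i$---is in the right spirit, but the concentration step has a real gap. The difficulty is exactly the one you flag: nothing forces a part $Q_i$ to be well-spread among the $s$ second-level sub-sub-parts of $X_{a,b}^{p}$. Recall that $Q_i$ is an intersection of ``intervals'' $X_{a,b}^{p_b}$ taken with respect to \emph{different} linear orders of $X_a$ (one order per $b$, coming from Claim~\ref{prop:nested}), so inside a fixed $X_{a,b}^{p}$ the set $Q_i$ can be an arbitrary subset. In particular it can lie entirely in a single second-level cell. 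When that happens the indicator ``$W_i$ is nonempty'' is essentially Bernoulli with success probability $\approx 1/s^{k-1}$, and Paley--Zygmund cannot give anything better than that. Since you need a union bound over $q$ parts while $s^{k-1} = (4k/\delta)^{10k(k-1)} \gg q$, the argument fails; making $s$ larger only makes this worse, not better. Conditional expectations does not rescue the situation either, because the objective you need to control is $\min_i |W_i|$, not a sum.

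The paper's proof sidesteps this by decoupling the two requirements. It applies Lemma~\ref{lemma:reg_part} a \emph{second} time to the whole graph with a much smaller parameter $\delta' = \delta^2/(8|\mathcal P|^4)$, obtaining a $\delta'$-homogeneous refinement $\mathcal W$, and then for each $Q_i$ draws a uniformly random vertex $w_i\in Q_i$ and sets $W_i$ to be the $\mathcal W$-cell containing $w_i$. The size condition $|W_i|\ge |Q_i|/(2q|\mathcal W|)$ fails with probability $<1/(2q)$ by a direct averaging (Markov-type) argument, and the homogeneity of $(W_i,W_j)$ fails with probability at most $\delta' n^2/(|Q_i||Q_j|)\le 1/(2|\mathcal P|^2)$ by $\delta'$-homogeneity; both union bounds then go through with room to spare. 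The key point you are missing is that one needs a second regularization whose error parameter is chosen \emph{after} seeing the sizes $|Q_i|$, so that the homogeneity failure probability can be driven below $1/\binom{q}{2}$ independently of the size bound.
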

\begin{proof}
Apply Lemma \ref{lemma:reg_part} to $G$ with parameter $\delta$ to obtain a $\delta$-homogeneous partition $\mathcal{P}$ which refines $\{X_1,\dots,X_k\}$.
Define
$\mathcal{Q} =
\{ U \in \mathcal{P} : |U| \geq \frac{\delta n}{|\Part|} \}$ and write $\mathcal{Q} = \{Q_1,\dots,Q_q\}$. Then Item 1 holds since $\mathcal{P}$ is $\delta$-homogeneous.
Setting $Z = \bigcup_{U \in \mathcal{P} \setminus \mathcal{Q}}{U}$, notice that $\Q$ refines $\{X_1 \setminus Z, \dots, X_k \setminus Z\}$ and that
$|Z| < |\mathcal{P}| \cdot \frac{\delta n}{|\mathcal{P}|} = \delta n$.
%Lemma \ref{lemma:reg_part} gives
%\begin{equation}\label{eq:partition_size_1}
%|\Part| \leq k (4/\delta)^k.
%% \geq \left( \frac{\delta}{2} \right)^k\frac{\delta n}{k}.
%\end{equation}
Apply Lemma \ref{lemma:reg_part} to $G$ again (with respect to the same partition $\{X_1,\ldots,X_k\}$), now with parameter
$\delta' := \frac{\delta^2}{8|\mathcal{P}|^4}$, to get a $\delta'$-homogeneous partition $\mathcal{V}$ with at most
%$k(16|\Part|^4/\delta^2)^k \leq (2k|\Part|/\delta)^{4k}$ parts.
$k(16|\Part|^4/\delta^2)^k$ parts.
%\leq \nolinebreak k^5(16/\delta)^k$.
Let $\W$ be the common refinement of $\Part$ and $\V$ and note that $\W$ is $\delta'$-homogeneous since it is a refinement of $\mathcal{V}$. Moreover,
\begin{equation}\label{eq:partition_size_2}
|\W| \leq |\Part| \cdot |\V| \leq |\Part| \cdot k(16|\Part|^4/\delta^2)^k.
%|\W| \leq |\Part| \cdot |\V| \leq |\Part| \cdot (2k|\Part|/\delta)^{4k}.
%\leq (2k|\Part|)^{5k} \leq (4k/\delta)^{5k^2}.
\end{equation}

For each $1 \leq i \leq q$, define $\W_i = \{W \in \W : W \subseteq Q_i\}$, choose a vertex $w_i \in Q_i$ uniformly at random and let $W_i \in \W_i$ be such that $w_i \in W_i$. We will show that with positive probability, the sets $W_1,...,W_q$ satisfy the statement in Item 2.
For $1 \leq i \leq q$, the probability that
$|W_i| < \frac{|Q_i|}{2q\left| \W \right|}$ is smaller than
$\frac{|\W| \cdot \frac{|Q_i|}{2q\left| \W \right|}}{|Q_i|} = \frac{1}{2q}$. By the union bound, with probability larger than $\frac{1}{2}$, every
$1 \leq i \leq q$ satisfies
$$|W_i| \geq \frac{|Q_i|}{2q|\W|} \geq
\frac{\big( \frac{\delta^2}{16|\mathcal{P}|^4} \big)^{k} \delta n }
{2k|\mathcal{P}|^{3}} \geq
\frac{\delta^{3k}n}{k(2|\mathcal{P}|)^{7k}} \geq
\frac{\delta^{3k}n}{k2^k(2/\delta)^{7k^2}} \geq
\left( \frac{\delta}{2k} \right)^{10k^2}n\;,
$$
where in the second inequality we used $|Q_i| \geq \frac{\delta n}{|\mathcal{P}|}$,
$q \leq |\mathcal{P}|$ and \eqref{eq:partition_size_2}, and in the fourth inequality we used the bound on $|\mathcal{P}|$ given by Lemma \ref{lemma:reg_part}.
For $1 \leq i < j \leq q$, the probability that the pair $(W_i,W_j)$ is not homogeneous is
$$
\sum{\frac{|W||W'|}{|Q_i||Q_j|}} \leq \frac{4|{\cal P}|^2}{\delta^2n^2}\sum{|W||W'|}\leq \frac{4|{\cal P}|^2}{\delta^2n^2}\cdot \delta' n^2 \leq \frac{1}{2|{\cal P}|^2}\;,
$$
where the sums are taken over all non-homogeneous pairs $(W,W') \in \W_i \times \W_j$, the first inequality uses
$|\Q_i|,|\Q_j| \geq \frac{\delta n}{2|\Part|}$ and the second the fact that $\W$ is $\delta'$-homogeneous.
By the union bound, with probability at least
$1 - \binom{q}{2}\frac{1}{|\Part|} \geq 1 - \binom{|\Part|}{2}\frac{1}{|\Part|} > \frac{1}{2}$, all pairs $(W_i,W_j)$ are homogeneous. We conclude that Item 2 holds with positive probability.
\end{proof}
%\begin{lemma}\label{lemma:2K2_removal}
%Let $G = (X \cup Y, E)$ be a bipartite graph. Suppose that one must change at least $\varepsilon|X||Y|$ of the edges/non-edges between $X$ and $Y$ to remove all copies of $2K_2$. Then $G$ contains at least
%$\varepsilon^C |X|^2 |Y|^2$ copies of $2K_2$.
%\end{lemma}
%\begin{proof}
%Put $n = |X|$, $m = |Y|$. The proof is by induction on $n$. We need the following inequality.
%\begin{claim}
%For every $a,b,x,y \in [0,1]$ and $p \geq 4$ we have
%\begin{equation*}
%\left( abx + (1-a)(1-b)y \right)^p \leq \left( a^2b^2x^p + (1-a)^2(1-b)^2y^p \right)\left( ab + (1-a)(1-b) \right)^{p-3}
%\end{equation*}
%\end{claim}
%\end{proof}

\section{A partial structure theorem for $C_4$-free graphs}\label{sec:structure}

Our main goal in this section is to prove Lemma \ref{lemma:main} stated below, which
gives an approximate partial structure theorem for induced $C_4$-free graphs.
The ``approximation'' will be due to the fact that the graph will only be close to having a certain
nice structure, while the ``partial'' will be since there will be a (possibly) big part of the graph
about which we will have no control. As we discussed in Section \ref{sec:intro}, this partialness is
unavoidable as evidenced by split graphs.

In addition to the lemmas from the previous section, we will also need the following theorems of
Goldreich, Goldwasser and Ron \cite{GGR} and of Gy\'{a}rf\'{a}s, Hubenko and Solymosi \cite{GHS}.
In both cases, $\omega(G)$ denotes maximum size of a clique in $G$.

\begin{theorem}[\cite{GGR}]\label{thm:GGR}
For every $\varepsilon \in (0,1)$ there is $q_{\ref{thm:GGR}}(\varepsilon) = O(\varepsilon^{-5})$ with the following property. Let
$\rho \in (0,1)$ be such that
$\varepsilon < \rho^2/2$ and let $G$ be a graph
%on $n \geq q_{\ref{thm:GGR}}(\varepsilon)$ vertices
which is $\varepsilon$-far from satisfying $\omega(G) \geq \rho n$.
Suppose $q \geq q_{\ref{thm:GGR}}(\varepsilon)$ and let $Q \in \binom{V(G)}{q}$ be a randomly chosen set of $q$ vertices of $G$.
Then with probability at least $\frac{3}{4}$ we have
$\omega(G[Q]) < (\rho - \frac{\varepsilon}{2})q$.
%\begin{enumerate}
%\item If $\omega(G) \geq \rho n$ then
%$\omega(G[Q]) \geq (\rho - \frac{\varepsilon}{2})q$ with probability at least $\frac{3}{4}$.
%\item if $G$ is $\varepsilon$-far from satisfying
%$\omega(G) \geq \rho n$ then $\omega(G[Q]) < (\rho - \frac{\varepsilon}{2})q$ with probability at least $\frac{3}{4}$.
%\end{enumerate}
\end{theorem}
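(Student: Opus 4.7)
The plan is to prove the contrapositive. Assume $\Pr[\omega(G[Q]) \geq (\rho - \varepsilon/2) q] \geq 1/4$; I will build an edge modification of $G$ of size at most $\varepsilon n^2$ that creates a clique of size $\rho n$, contradicting the $\varepsilon$-farness hypothesis. The strategy follows the canonical-tester paradigm of Goldreich--Goldwasser--Ron: regard $Q$ as a two-level sample and use the first level to extract a ``seed clique'' $C_0$ whose common neighborhood $B = B(C_0)$ is both nearly of size $\rho n$ and almost complete.

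Concretely, I would split $Q = Q_1 \sqcup Q_2$ into disjoint uniform samples of sizes $q_1 = \Theta(\varepsilon^{-2})$ and $q_2 = \Theta(\varepsilon^{-5})$. An averaging argument provides an outcome $Q_1^{\ast}$ for $Q_1$ for which the acceptance event still has conditional probability at least $1/4$. Inside $G[Q_1^{\ast}]$, choose a clique $C_0 \subseteq Q_1^{\ast}$ that appears as the $Q_1^{\ast}$-intersection of a witness clique of size $(\rho-\varepsilon/2)q$ in $G[Q_1^{\ast}\cup Q_2]$ with non-negligible probability; a Chernoff bound on the random split shows $|C_0| \gtrsim (\rho - \varepsilon/2) q_1$. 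Now let $B = C_0 \cup \{v \in V(G) : v \sim u \text{ for every } u \in C_0\}$. Any witness clique in $G[Q_1^{\ast} \cup Q_2]$ extending $C_0$ must draw at least $(\rho - \varepsilon/2) q_2$ vertices from $B \cap Q_2$; a standard Chernoff bound on the binomial $|B \cap Q_2|$ (valid since $q_2 \gg \varepsilon^{-2}$) then forces $|B|/n \geq \rho - \varepsilon/4$.

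The remaining step is to argue that most of $B$ is already almost a clique, so that only $\varepsilon n^2$ edge additions suffice. Sort the vertices of $B$ by their internal degree within $B$ and let $B^\prime$ be the top $\rho n$ vertices (padding with arbitrary vertices outside $B$ if $|B| < \rho n$, at an extra edge cost of $O(\varepsilon n^2)$). For each non-edge $\{u,v\}$ inside $B^\prime$, a witness clique in $G[Q_1^{\ast} \cup Q_2]$ cannot contain both $u$ and $v$. Summing this constraint over non-edges and using a pair-level Chernoff / second-moment estimate (with $q_2 = \Omega(\varepsilon^{-5})$ providing the slack for a union bound over pairs) I would conclude that the number of non-edges inside $B^\prime$ is at most $\varepsilon n^2$. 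Filling in these missing edges yields a clique of size $\rho n$ after at most $\varepsilon n^2$ modifications, producing the desired contradiction.

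The main obstacle is this last quantitative step: upgrading the first-moment fact ``$|B|$ is of order $\rho n$'' to the structural fact ``$B^\prime$ is $\varepsilon n^2$-close to being a clique''. This is where the precise sample size $q = O(\varepsilon^{-5})$ is dictated: one factor of $\varepsilon^{-2}$ is needed to extract the seed clique $C_0$, another $\varepsilon^{-2}$ to certify $|B|$ by single-vertex Chernoff, and an additional $\varepsilon^{-1}$, together with the hypothesis $\varepsilon < \rho^2/2$ (which keeps the relative density $\rho - \varepsilon/2$ bounded away from zero), is used to push the pair-level concentration through a union bound over the $\Theta(n^2)$ potential non-edges. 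Any of these three concentration budgets could be the binding constraint, and coordinating them is the delicate part of the proof.
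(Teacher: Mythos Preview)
The paper does not prove this statement at all: Theorem~\ref{thm:GGR} is quoted from \cite{GGR} and used as a black box in the proof of Lemma~\ref{lemma:clique}. There is therefore no ``paper's own proof'' to compare your attempt against.

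As for the sketch itself, the two-level seed-clique/common-neighborhood paradigm is indeed the one underlying the GGR analysis, and your first two steps (extracting $C_0$ and certifying $|B| \gtrsim (\rho-\varepsilon/4)n$ via a vertex-level Chernoff bound) are sound. The gap is exactly where you flag it: the passage from ``$B$ is large'' to ``$B'$ has at most $\varepsilon n^2$ non-edges''. Your proposed mechanism---``push the pair-level concentration through a union bound over the $\Theta(n^2)$ potential non-edges''---cannot work, because the sample size $q$ is independent of $n$, so no tail bound on an event defined by $Q_2$ can be made smaller than $1/n^2$. More substantively, a large common neighborhood $B$ of a short seed clique need not be close to a clique at all (take $B$ to be two disjoint cliques of size $\rho n/2$ glued to a common $C_0$), so the single seed $C_0$ does not by itself pin down the structure you need.

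The way GGR actually close this gap is not by a union bound over vertex pairs in $V(G)$, but by a union bound over the $2^{q_1}$ possible seed cliques $C_0 \subseteq Q_1^\ast$, combined with a \emph{global} edge-count concentration: if $B(C_0)$ had more than $\varepsilon n^2$ internal non-edges, then the number of non-edges inside $B(C_0)\cap Q_2$ would, with high probability, exceed the slack $|B\cap Q_2| - (\rho-\varepsilon/2)q_2$ available for vertices outside any clique, precluding a clique of the required size. This is a single second-moment (or Chernoff) estimate on one random variable---the non-edge count in the sample---rather than $n^2$ separate estimates, and the union bound is only over the $2^{O(\varepsilon^{-2})}$ choices of $C_0$, which is what forces the extra polynomial factors in $q$.
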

%(We now only use the negative direction - Item 2. Delete Item 1???)

\begin{theorem}[\cite{GHS}]\label{thm:GHS}
Every induced $C_4$-free graph with $n$ vertices and at least $\alpha n^2$ edges satisfies $\omega(G) \geq 0.4\alpha^2 n$.
\end{theorem}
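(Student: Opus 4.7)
The plan is to combine a cherry-counting argument with the following structural observation specific to induced $C_4$-freeness: whenever $u,v \in V(G)$ are non-adjacent, the common neighborhood $N(u)\cap N(v)$ must induce a clique in $G$, for otherwise two non-adjacent vertices $x,y \in N(u)\cap N(v)$ together with $u,v$ would form an induced $C_4$. In particular, $\omega(G)\ge \max_{\{u,v\}\notin E(G)}|N(u)\cap N(v)|$.

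Next I would double-count pairs $(\{u,v\},x)$ with $x \in N(u)\cap N(v)$. Writing $m=e(G)\ge \alpha n^2$ and using convexity,
\[
\sum_{\{u,v\}\subseteq V(G)} |N(u)\cap N(v)| \;=\; \sum_{x\in V(G)}\binom{d(x)}{2} \;\geq\; n\binom{2m/n}{2} \;\geq\; 2\alpha^2 n^3 - \alpha n^2.
\]
Splitting the left side according to whether $\{u,v\}$ is an edge gives $3t+p\geq 2\alpha^2 n^3-\alpha n^2$, where $t$ counts triangles and $p$ is the number of induced copies of $P_3$. The structural observation above yields $p \le \binom{n}{2}\omega(G)$, so assuming $\omega(G)<0.4\alpha^2 n$ towards a contradiction forces $3t\geq (1.8-o(1))\alpha^2 n^3$; hence by averaging some vertex $v$ sits in $\Omega(\alpha^2 n^2)$ triangles, i.e.\ the induced $C_4$-free subgraph $G[N(v)]$ carries $\Omega(\alpha^2 n^2)$ edges. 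One then applies the same bound inductively to $G[N(v)]$ and adjoins $v$ to the resulting clique.

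The main obstacle is closing this recursion with the sharp constant $0.4$: if $d(v)$ is close to $n$, then the edge density of $G[N(v)]$ is only $\Theta(\alpha^2)$ and a naive induction loses an extra factor of $\alpha^{-2}$ compared with the target. I would handle this by splitting on $d(v)$. When $d(v) \le c\alpha^{2/3}n$, the density inside $G[N(v)]$ is large enough that the inductive bound there (plus the vertex $v$) already exceeds $0.4\alpha^2 n$. When $d(v)$ is much larger, a simple degree-sum argument shows that only $O(\alpha^{1/3}n)$ vertices can have such degree; so either these high-degree vertices form an almost-clique of size already much larger than $0.4\alpha^2 n$, or some two of them are non-adjacent, in which case their common neighborhood, a clique by the structural observation, has size $\ge d(u)+d(v)-n$ and can be shown to be large enough via a further averaging. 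Tracking the constants through this case split to reach $0.4$ rather than a weaker constant is the technical heart of the argument; once this is done, the rest is the double-counting above.
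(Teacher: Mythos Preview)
The paper does not prove this statement at all: Theorem~\ref{thm:GHS} is quoted from Gy\'arf\'as--Hubenko--Solymosi \cite{GHS} and used as a black box in the proof of Lemma~\ref{lemma:clique}. There is therefore no in-paper proof to compare against.

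On the merits of your sketch: the structural observation that $N(u)\cap N(v)$ is a clique whenever $(u,v)\notin E(G)$ is indeed the key to the result, and your cherry count $3t+p\ge 2\alpha^2 n^3-\alpha n^2$ together with $p\le \binom{n}{2}\omega(G)$ is correct. However, the plan for closing the recursion in the large-degree case has a genuine gap. You write that ``a simple degree-sum argument shows that only $O(\alpha^{1/3}n)$ vertices can have degree larger than $c\alpha^{2/3}n$.'' A degree-sum argument gives at most $2m/(c\alpha^{2/3}n)$ such vertices, and the only universal upper bound on $2m$ is $n(n-1)$; this yields $O(\alpha^{-2/3}n)$ high-degree vertices, which is vacuous. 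No strengthening using merely $m\ge \alpha n^2$ helps, since that is a \emph{lower} bound on $m$. Consequently the dichotomy you set up in the range $d(v)>c\alpha^{2/3}n$ --- either the high-degree set is an almost-clique of size already exceeding $0.4\alpha^2 n$, or two of its members are non-adjacent with $|N(u)\cap N(v)|\ge d(u)+d(v)-n$ large --- never gets started: you have no nontrivial bound on the size of that set, and the inclusion--exclusion estimate $d(u)+d(v)-n$ is only positive once $d(u)+d(v)>n$, which $c\alpha^{2/3}n$ does not guarantee for small $\alpha$.

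So your outline proves $\omega(G)\ge c'\alpha^2 n$ for \emph{some} absolute constant (simply threshold on whether $p\ge \alpha^2 n^3$; if so, pigeonhole over non-edges gives a clique of size $\ge 2\alpha^2 n$ immediately), but the mechanism you propose for reaching the constant $0.4$ does not work as written. If you want the sharp constant you will need a different way to handle the high-degree case; the recursion alone will not do it, and the degree-sum step must be replaced.
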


Let use derive the following important corollary of the the above two theorems.
For a non-empty set $X \subseteq V(G)$, define $d(X) = e(X) / \binom{|X|}{2}$, where $e(X)$ is the number of edges of $G$ with both endpoints in $X$.

\begin{lemma}\label{lemma:clique}
Let $\alpha \in [0,\frac{1}{2})$ and let $G$ be a graph on $n$ vertices with at least $\alpha n^2$ edges. Then for every $\beta \in (0,1)$, either $G$ contains $\Omega(\alpha^{80}\beta^{20} n^4)$ induced copies of $C_4$ or there is a set
$X \subseteq V(G)$ with $|X| \geq 0.1\alpha^2 n$ and $d(X) \geq 1 - \beta$.
\end{lemma}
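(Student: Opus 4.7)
The plan is to prove the contrapositive: assume no $X\subseteq V(G)$ with $|X|\geq \rho n$ has $d(X)\geq 1-\beta$, where $\rho:=0.1\alpha^2$, and deduce the existence of $\Omega(\alpha^{80}\beta^{20}n^4)$ induced copies of $C_4$. The first step is to translate the hypothesis into a quantitative statement about being far from having a large clique: any $\rho n$-subset carries at least $\beta\binom{\rho n}{2}\geq \beta\rho^2 n^2/4$ non-edges, so $G$ is $\varepsilon$-far from $\omega(G)\geq \rho n$ for $\varepsilon:=\beta\rho^2/4=\Theta(\beta\alpha^4)$; the requirement $\varepsilon<\rho^2/2$ in Theorem~\ref{thm:GGR} follows from $\beta<1$.

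The second step is to sample. Set $q:=\max\{q_{\ref{thm:GGR}}(\varepsilon),\,C/\alpha^2\}=\Theta(\beta^{-5}\alpha^{-20})$ for a sufficiently large absolute constant $C$, and let $Q$ be a uniformly random $q$-subset of $V(G)$. Theorem~\ref{thm:GGR} gives $\omega(G[Q])<(\rho-\varepsilon/2)q<\rho q$ with probability at least $3/4$. Separately, $Y:=e(G[Q])$ has expectation at least $(1-o(1))\alpha q^2$, and splitting ordered pairs of edges of $G$ by how many endpoints they share yields $\operatorname{Var}(Y)=O(q^3)$, so Chebyshev gives $Y\geq 0.6\alpha q^2$ with probability $\geq 7/8$, provided $C$ is large enough. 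A union bound then guarantees both events simultaneously on an event of probability at least $5/8$.

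The third step combines these with Theorem~\ref{thm:GHS}. On the joint event, $G[Q]$ has more than $0.6\alpha q^2$ edges and $\omega(G[Q])<0.1\alpha^2 q$. If $G[Q]$ were induced $C_4$-free, Theorem~\ref{thm:GHS} applied with density parameter $0.6\alpha$ would force $\omega(G[Q])\geq 0.4(0.6\alpha)^2 q=0.144\alpha^2 q>0.1\alpha^2 q$, a contradiction. Hence $G[Q]$ contains at least one induced $C_4$, so $\mathbb{E}[\#C_4\text{ in }G[Q]]\geq 5/8$; since this expectation equals $\#C_4(G)\cdot q(q-1)(q-2)(q-3)/(n(n-1)(n-2)(n-3))$, one concludes $\#C_4(G)=\Omega(n^4/q^4)=\Omega(\alpha^{80}\beta^{20}n^4)$.

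I expect the only real technical obstacle to be the Chebyshev estimate in step two: a naive Markov-type bound yields concentration only with probability $\Omega(\alpha)$, which is too weak for the union bound. The fix is that Theorem~\ref{thm:GGR} remains valid for every $q\geq q_{\ref{thm:GGR}}(\varepsilon)$, so one can freely enlarge $q$ by a factor depending on $\alpha$ to drive the variance-to-mean-squared ratio below any desired constant, at no cost in the final exponent. Conceptually, the argument rests on the three-way interplay between the Ramsey-type content of Theorem~\ref{thm:GHS}, the local-testability of having a large clique coming from Theorem~\ref{thm:GGR}, and the hypothesis that $G$ is edge-rich yet contains no large near-clique.
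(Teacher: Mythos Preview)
Your proposal is correct and follows essentially the same approach as the paper's proof: set $\rho=0.1\alpha^2$ and $\varepsilon=\beta\rho^2/4$, show $G$ is $\varepsilon$-far from having a clique of size $\rho n$, sample $q=\Theta(\alpha^{-20}\beta^{-5})$ vertices, combine Theorem~\ref{thm:GGR} with a second-moment concentration bound on the sampled edge count, and then invoke Theorem~\ref{thm:GHS} to force an induced $C_4$ in the sample. The paper packages the Chebyshev step into a separate claim (Claim~\ref{claim:density_test}) rather than spelling it out inline, and uses the threshold $\tfrac{\alpha}{2}q^2$ instead of your $0.6\alpha q^2$, but these are cosmetic differences; the structure and all parameter choices match.
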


\noindent
In the proof of Lemma \ref{lemma:clique} we need the following simple fact.

\begin{claim}\label{claim:density_test}
Let $\alpha \in (0,1)$ and let $G$ be a graph with $n$ vertices and at least $\alpha n^2$ edges. Then for every $r \geq \frac{100}{\alpha^2}$, a sample of $r$ vertices from $G$ spans at least $\frac{\alpha}{2}r^2$ edges with probability at least $\frac{2}{3}$.
\end{claim}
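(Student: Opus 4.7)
The plan is to apply Chebyshev's inequality to the random variable $X := e(G[Q])$, where $Q$ is the $r$-vertex random sample. Writing $X = \sum_{1 \le i < j \le r} Y_{ij}$ with $Y_{ij}$ the indicator that the $i$-th and $j$-th sampled vertices form an edge, and setting $p := e(G)/\binom{n}{2}$, linearity of expectation gives $\mathbb{E}[X] = \binom{r}{2} p$. Since $e(G) \ge \alpha n^2$ we have $p \ge 2\alpha \cdot \tfrac{n}{n-1} \ge 2\alpha$, so $\mathbb{E}[X] \ge \alpha r(r-1)$. In particular, once $r$ is not tiny we get $\mathbb{E}[X] - \tfrac{\alpha}{2}r^2 \ge \tfrac{\alpha}{4}r^2$, and the task reduces to a concentration bound for $X$ around its mean.

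Next I would bound $\mathrm{Var}(X)$ using the standard decomposition over pairs of index-pairs. Diagonal terms contribute $\sum \mathrm{Var}(Y_{ij}) \le \binom{r}{2}$. Disjoint index-pairs $\{i,j\}$ and $\{k,l\}$ have essentially independent indicators (sampling without replacement induces only an $O(1/n)$ correlation, which is negligible). The dominant contribution comes from the $O(r^3)$ pairs that share exactly one index, and for each such pair $\mathrm{Cov}(Y_{ij}, Y_{ik}) \le \mathbb{E}[Y_{ij} Y_{ik}] \le \mathbb{E}[Y_{ij}] = p \le 1$. Hence $\mathrm{Var}(X) = O(r^3)$.

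Chebyshev's inequality then yields
\[
\Pr\!\left[X < \tfrac{\alpha}{2} r^2\right] \le \Pr\!\left[\bigl|X - \mathbb{E}[X]\bigr| \ge \tfrac{\alpha}{4} r^2\right] \le \frac{\mathrm{Var}(X)}{(\alpha r^2/4)^2} = O\!\left(\tfrac{1}{\alpha^2 r}\right),
\]
which drops below $1/3$ once $r \ge 100/\alpha^2$. This is a routine second-moment computation with no real obstacle; the only mild technical point is the bookkeeping for the pairs $\{i,j\}, \{i,k\}$ sharing one index, where one must retain at least the crude bound $\mathrm{Cov}(Y_{ij},Y_{ik}) \le 1$ (rather than any naive $O(r)$-style term) in order to get the correct order $\mathrm{Var}(X) = O(r^3)$ and thus the right dependence on $\alpha$.
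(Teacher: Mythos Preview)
Your approach is exactly what the paper has in mind: the authors do not give a proof at all, stating only that ``the proof is a standard application of the second moment method'' and referring to Alon--Spencer. Your outline---computing $\mathbb{E}[X]=\binom{r}{2}p$ with $p\ge 2\alpha$, bounding $\mathrm{Var}(X)=O(r^3)$ via the three types of index-pair overlaps, and finishing with Chebyshev---is precisely that standard argument, so there is nothing to compare.
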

The proof of Claim \ref{claim:density_test} is a standard application of the second moment method (see e.g. \cite{Prob_Method}), and is thus omitted.
\begin{proof}[Proof of Lemma \ref{lemma:clique}]
Set $\rho = 0.1\alpha^2$,
$\varepsilon = \frac{\rho^2\beta}{4} = \frac{\alpha^4\beta}{400}$ and
$r =
\max\{ q_{\ref{thm:GGR}}(\varepsilon), \frac{100}{\alpha^2}\}$.
By Theorem \ref{thm:GGR} we have $r = O(\alpha^{-20}\beta^{-5})$.
We assume that there is no $X \subseteq V(G)$ with $|X| \geq 0.1\alpha^2 n$ and
$d(X) \geq 1 - \beta$, and prove that $G$ contains $\Omega(\alpha^{80}\beta^{20} n^4) $ induced copies of $C_4$.
Let $X \subseteq V(G)$ be such that $|X| \geq \rho n$. Since $d(X) \leq 1 - \beta$, we have
$\binom{|X|}{2} - e(G) \geq \beta \binom{|X|}{2} \geq \beta \frac{|X|^2}{4} \geq \frac{\rho^2\beta}{4}n^2 = \varepsilon n^2$. This shows that $G$ is $\varepsilon$-far from containing a clique of size $\rho n$ or larger. By our choice of $r$ via Theorem \ref{thm:GGR}, a random sample $R$ of $r$ vertices of $G$ satisfies $\omega(G[R]) < (\rho - \frac{\varepsilon}{2})r < 0.1\alpha^2r$ with probability at least $\frac{2}{3}$. By Claim \ref{claim:density_test}, we also have
$e(R) > \frac{\alpha}{2}r^2$ with probability at least $\frac{2}{3}$. So with probability at least $\frac{1}{3}$ we have both $\omega(G[R]) < 0.1\alpha^2 r$ and $e(R) > \frac{\alpha}{2}r^2$. If both events happen, then $G[R]$ must contain an induced copy of $C_4$, by Theorem \ref{thm:GHS}. We conclude that $G$ contains at least
$\frac{1}{3}\binom{n}{r}/\binom{n-4}{r-4} = \frac{1}{3}\binom{n}{4}/\binom{r}{4} = \Omega( \alpha^{80}\beta^{20}n^4 )$ induced copies of $C_4$.
\end{proof}

The last ingredient we need is the following result of Alon, Fischer and Newman \cite{AFN}.
For a pair of disjoint vertex sets $X,Y$, we say that $(X,Y)$ is {\em $\varepsilon$-far} from being induced $M_2$-free if one has to add/delete at least $\varepsilon|X||Y|$ of the edges between $X$ and $Y$ to make $(X,Y)$ induced $M_2$-free.

\begin{lemma}[\cite{AFN}]\label{lemma:M2_removal}
There is an absolute constant $d > 0$ such that the following holds. If
$(X,Y)$ is $\varepsilon$-far from being induced $M_2$-free then $(X,Y)$ contains at least $\varepsilon^d |X|^2|Y|^2$ induced copies of $M_2$.
\end{lemma}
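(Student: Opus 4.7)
The plan is to prove the contrapositive: if $(X,Y)$ contains fewer than $\varepsilon^d|X|^2|Y|^2$ induced copies of $M_2$ (for a suitable absolute constant $d$), then $(X,Y)$ can be made induced $M_2$-free by flipping at most $\varepsilon|X||Y|$ of its edges. By Claim~\ref{prop:nested}, being induced $M_2$-free (after some ordering $x_1,\dots,x_m$ of $X$) is equivalent to the neighborhoods $N_Y(x_i)$ forming a chain under inclusion, so the task is to exhibit a chain $\widetilde N_1\subseteq\dots\subseteq\widetilde N_m$ of subsets of $Y$ with $\sum_{i}|N_Y(x_i)\triangle\widetilde N_i|\le\varepsilon|X||Y|$.

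I would fix the ordering so that $|N_Y(x_1)|\le\dots\le|N_Y(x_m)|$ and then, for each $y\in Y$ independently, pick the non-decreasing $0/1$ vector closest (in Hamming distance) to the indicator vector $a_y\in\{0,1\}^m$ defined by $(a_y)_i=\mathbf{1}[y\in N_Y(x_i)]$. Writing $d(a_y)$ for this minimum distance, the total number of flips equals $\sum_y d(a_y)$. The key pointwise inequality is
\[
d(a)^2\le 4\,\mathrm{inv}(a),\qquad\mathrm{inv}(a):=|\{(i,j):i<j,\,a_i=1,\,a_j=0\}|,
\]
which I would prove by evaluating the specific threshold $c=n_0:=|\{i:a_i=0\}|$: if $p$ denotes the number of $1$'s in the first $n_0$ positions, then exactly $p$ of the $0$'s must lie in the last $n_1:=m-n_0$ positions, so $d(a)\le 2p$; at the same time every such ``misplaced $1$'' paired with every such ``misplaced $0$'' is an inversion, yielding $\mathrm{inv}(a)\ge p^2$.

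To aggregate, I apply Cauchy--Schwarz twice. First,
\[
\sum_{y\in Y} d(a_y)\;\le\;2\sum_{y\in Y}\sqrt{\mathrm{inv}(a_y)}\;\le\;2\sqrt{|Y|\cdot \sum_{y\in Y}\mathrm{inv}(a_y)}.
\]
Swapping summations gives $\sum_{y}\mathrm{inv}(a_y)=\sum_{i<j}|N_Y(x_i)\setminus N_Y(x_j)|$. Because the ordering is by increasing degree, for every $i<j$ we have $|N_Y(x_j)\setminus N_Y(x_i)|\ge|N_Y(x_i)\setminus N_Y(x_j)|$, hence
\[
|N_Y(x_i)\setminus N_Y(x_j)|^{2}\;\le\;|N_Y(x_i)\setminus N_Y(x_j)|\cdot|N_Y(x_j)\setminus N_Y(x_i)|,
\]
and summing over $i<j$ the right-hand side is exactly the number $I$ of induced copies of $M_2$ in $(X,Y)$. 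A second Cauchy--Schwarz gives $\sum_{i<j}|N_Y(x_i)\setminus N_Y(x_j)|\le\sqrt{\binom{m}{2}\cdot I}$. Plugging in $I<\varepsilon^d|X|^2|Y|^2$ and collecting constants, one obtains $\sum_y d(a_y)=O(|X|\,|Y|\,\varepsilon^{d/4})$, which is below $\varepsilon|X||Y|$ for any $d\ge 8$ (and $\varepsilon$ sufficiently small).

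The main obstacle is the sharp pointwise bound $d(a)^2\lesssim\mathrm{inv}(a)$: it is tight up to a constant (as witnessed by $1^k0^k$ and by $(1,0)^k$) and cannot be strengthened to a linear inequality of the form $d(a)\lesssim\mathrm{inv}(a)$, which is what forces the double Cauchy--Schwarz and the resulting polynomial loss in the exponent. The remainder of the argument is orchestrated so that the degree ordering converts the a priori weak control on the one-sided differences $|N_Y(x_i)\setminus N_Y(x_j)|$ into control over the \emph{product} $|N_Y(x_i)\setminus N_Y(x_j)|\cdot|N_Y(x_j)\setminus N_Y(x_i)|$, which is exactly the quantity counted by the induced $M_2$ hypothesis.
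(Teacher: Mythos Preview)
The paper does not prove this lemma; it merely quotes it from \cite{AFN}. Your argument is a correct, self-contained elementary proof, so in that sense it supplies strictly more than the paper does.

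A few remarks on the details. The pointwise bound $d(a)^2\le 4\,\mathrm{inv}(a)$ is verified exactly as you say. The identification $\sum_{i<j}|N_Y(x_i)\setminus N_Y(x_j)|\cdot|N_Y(x_j)\setminus N_Y(x_i)|$ with the number of induced copies of $M_2$ is correct (each unordered quadruple is counted once when $\{x_i,x_j\}$ is fixed). Chasing the constants, your chain of inequalities yields $\sum_y d(a_y)\le 2\cdot 2^{-1/4}\,\varepsilon^{d/4}\,|X||Y|$, so with $d=8$ the requirement becomes $\varepsilon\le 2^{-3/4}$; since any $(X,Y)$ that is $\varepsilon$-far from induced $M_2$-free necessarily has $\varepsilon\le\tfrac12$ (making the bipartite graph complete or empty costs at most $|X||Y|/2$ flips), the caveat ``$\varepsilon$ sufficiently small'' is unnecessary and $d=8$ works for all relevant $\varepsilon$. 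Finally, after replacing each $a_y$ by its nearest non-decreasing vector $0^{c_y}1^{m-c_y}$, the new neighbourhoods $\widetilde N_Y(x_i)=\{y:c_y<i\}$ are nested, so Claim~\ref{prop:nested} indeed certifies induced $M_2$-freeness.
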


The following is the key lemma of this section.
Note that it gives us a lot of information about $G[Y]$ and $G[X_1 \cup \cdots \cup X_k]$ but no information
about the bipartite graph connecting $X_1 \cup \cdots \cup X_k$ and $Y$.

\begin{lemma}\label{lemma:main}
There is an absolute constant $c > 0$, such that for every
$\alpha,\gamma \in (0,1)$, every $n$-vertex graph $G$ either contains
$\Omega(\alpha^c\gamma^c n^4)$ induced copies of $C_4$, or admits a vertex partition $V(G) = X_1 \cup \dots \cup X_k \cup Y$ with the following properties.
\begin{enumerate}
\item $e(Y) < \alpha n^2$.
\item
$|X_i| \geq 0.1\alpha^3 n$
%$|X_i| = \left\lceil 0.1\alpha^3 n \right\rceil$
and $d(X_i) \geq 1 - \gamma$ for every
$1 \leq i \leq k$.
\item For every $1 \leq i < j \leq k$, the pair $(X_i,X_j)$ is $\gamma$-close to being induced $M_2$-free.
\end{enumerate}
\end{lemma}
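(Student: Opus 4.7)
The plan is to build the partition greedily: iteratively peel off a large almost-clique using Lemma \ref{lemma:clique}, and then argue that the bipartite pairs between these almost-cliques are automatically close to being induced $M_2$-free, since otherwise Lemma \ref{lemma:M2_removal} would hand us many induced $C_4$'s. Let $d$ be the absolute constant from Lemma \ref{lemma:M2_removal}, and set the density-slack parameter $\beta := \gamma^{d+1}$. Starting with $Y_0 = V(G)$, at step $i \geq 1$ we stop and set $Y = Y_{i-1}$ if $e(G[Y_{i-1}]) < \alpha n^2$; otherwise we apply Lemma \ref{lemma:clique} to $G[Y_{i-1}]$ with density parameter $\alpha_i := e(G[Y_{i-1}])/|Y_{i-1}|^2 \geq \alpha$ and slack $\beta$. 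In the first alternative of Lemma \ref{lemma:clique} we obtain $\Omega(\alpha_i^{80}\beta^{20}|Y_{i-1}|^4) = \Omega(\alpha^{80}\gamma^{20(d+1)}n^4)$ induced copies of $C_4$ (using $|Y_{i-1}| \leq n$), and the lemma is proved. In the second alternative we obtain an almost-clique $X_i$ with $|X_i| \geq 0.1\alpha_i^2 |Y_{i-1}| \geq 0.1\alpha^2 n \geq 0.1\alpha^3 n$ and $d(X_i) \geq 1-\beta \geq 1-\gamma$; we then set $Y_i = Y_{i-1} \setminus X_i$ and continue. The disjoint $X_i$'s have size $\Omega(\alpha^2 n)$, so the loop terminates after at most $k = O(\alpha^{-2})$ iterations, and by construction the resulting partition $V(G) = X_1 \cup \cdots \cup X_k \cup Y$ already satisfies Items~1 and 2.

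For Item~3, suppose toward contradiction that some pair $(X_i,X_j)$ is $\gamma$-far from being induced $M_2$-free. Lemma \ref{lemma:M2_removal} then produces at least $\gamma^d|X_i|^2|X_j|^2$ induced $M_2$'s in $(X_i,X_j)$. The key observation is that any induced $M_2$ on $x,x'\in X_i$, $y,y'\in X_j$ for which additionally $xx' \in E(G)$ and $yy'\in E(G)$ yields an induced $C_4$ on the cyclic sequence $x,y,y',x'$, whose four cycle-edges are $xy, yy', y'x', x'x$ and whose two non-edges are the non-edges $xy'$ and $x'y$ of the $M_2$. Since $d(X_i), d(X_j) \geq 1-\beta$, the number of induced $M_2$'s in $(X_i, X_j)$ failing $xx' \in E(G)$ is at most $\beta \binom{|X_i|}{2}\binom{|X_j|}{2}$, and similarly for $yy' \in E(G)$; together these bad $M_2$'s contribute at most $\tfrac{1}{2}\beta|X_i|^2|X_j|^2 = \tfrac{1}{2}\gamma^{d+1}|X_i|^2|X_j|^2$, which is dominated by $\gamma^d|X_i|^2|X_j|^2$ since $\gamma < 1$. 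Hence at least half of the induced $M_2$'s produce distinct induced $C_4$'s, giving $\Omega(\gamma^d|X_i|^2|X_j|^2) = \Omega(\gamma^d\alpha^{12}n^4)$ induced copies of $C_4$ in $G$ --- again enough to finish.

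The main delicate point is exactly the mismatch between the density slack delivered by Lemma \ref{lemma:clique} and the $M_2$-farness exponent coming out of Lemma \ref{lemma:M2_removal}: we must be dense enough inside each $X_i$ (slack $\beta = \gamma^{d+1}$) that the error term contributed by non-edges in the almost-cliques is negligible compared with the $\gamma^d$-count of induced $M_2$'s, yet we must still deliver only the weaker bound $d(X_i) \geq 1-\gamma$ that the statement requires. The choice $\beta = \gamma^{d+1}$ reconciles the two, and taking $c$ to be, say, the maximum of $80 + 20(d+1)$ and $12+d$ yields the uniform polynomial bound $\Omega(\alpha^c\gamma^c n^4)$ promised in the statement. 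Notably, the regularity lemma plays no role here; its place is taken by Lemma \ref{lemma:clique}, which is where (inside its proof) the Goldreich--Goldwasser--Ron theorem gets used.
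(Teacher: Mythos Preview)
Your proof is correct and follows essentially the same approach as the paper: greedily peel off almost-cliques via Lemma~\ref{lemma:clique}, then use Lemma~\ref{lemma:M2_removal} together with the high internal density of the $X_i$'s to handle Item~3. The only differences are cosmetic parameter choices---you feed the actual density $\alpha_i = e(G[Y_{i-1}])/|Y_{i-1}|^2$ into Lemma~\ref{lemma:clique} (whereas the paper just uses $\alpha$ and the observation $|V_i|\geq \alpha n$), and you take $\beta=\gamma^{d+1}$ where the paper takes $\beta=0.25\gamma^d$; both work. One small expository point: your line ``$\Omega(\alpha_i^{80}\beta^{20}|Y_{i-1}|^4)=\Omega(\alpha^{80}\gamma^{20(d+1)}n^4)$ (using $|Y_{i-1}|\le n$)'' is correct but compressed---it relies on the sharper inequality $\alpha_i\ge \alpha n^2/|Y_{i-1}|^2$ (not merely $\alpha_i\ge\alpha$), so that $\alpha_i^{80}|Y_{i-1}|^4\ge \alpha^{80}n^{160}/|Y_{i-1}|^{156}\ge \alpha^{80}n^4$.
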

\begin{proof}
We prove the lemma with $c = \max(84, 20d)$,
where $d$ is the constant from Lemma \ref{lemma:M2_removal}. We inductively define two sequences of sets, $(V_i)_{i \geq 0}$ and
$(X_i)_{i \geq 1}$. Set $V_0 = V(G)$. At the $i$'th step
(starting from $i = 0$), if $e(V_i) < \alpha n^2$ then we stop.  Note that if we did not stop then $|V_i| \geq \alpha n$.
If $e(V_i) \geq \alpha n^2$ then by Lemma \ref{lemma:clique}, applied to $G[V_i]$ with parameters $\alpha$ and
$\beta = 0.25\gamma^d$, either $G[V_i]$ contains
$\Omega(\alpha^{80} \gamma^{20d} |V_i|^4) \geq \Omega(\alpha^{84}\gamma^{20d}n^4)$ induced copies of $C_4$ or there is
$X_{i+1} \subseteq V_i$ with $|X_{i+1}| \geq 0.1\alpha^2|V_i| \geq 0.1\alpha^3 n$ and $d(X_i) \geq 1 - 0.25\gamma^d$. If the former case happens then the assertion of the lemma holds, so we may assume that the latter case happens, in which case we set
$V_{i+1} = V_i \setminus X_{i+1}$ and continue.
Suppose that this process stops at the $k$'th step for some
$k \geq 0$. Set $Y = V_k$. We clearly have
$V(G) = X_1 \cup \dots \cup X_k \cup Y$. For every $1 \leq i \leq k$ we have
$|X_i| \geq 0.1\alpha^3 n$ and
$d(X_i) \geq 1 - 0.25\gamma^d \geq 1 - \gamma$. Since the process stopped at the $k$'th step, we must have $e(Y) = e(V_k) < \alpha n^2$.

To finish the proof, we show that if Item 3 in the lemma does not hold then $G$ contains at least
$0.5 \cdot 10^{-4}\alpha^{12}\gamma^d n^4$ induced copies of $C_4$.
Assume that for some $1 \leq i < j \leq k$, the pair $(X_i,X_j)$ is $\gamma$-far from being induced $M_2$-free. By Lemma \ref{lemma:M2_removal}, $(X_i,X_j)$ contains at least
$\gamma^{d}|X_i|^2|X_j|^2$ induced copies of $M_2$. Let $(x_i,x_i',x_j,x_j')$ be such a copy, where $x_i,x_i' \in X_i$ and $x_j,x_j' \in X_j$. If $(x_i,x_i'),(x_j,x_j') \in E(G)$ then $x_i,x_i',x_j,x_j'$ span an induced copy of $C_4$. Since $d(X_i),d(X_j) \geq 1 - 0.25\gamma^d$, There are at most $0.5\gamma^d |X_i|^2|X_j|^2$ quadruples of distinct vertices
$(x_i,x_i',x_j,x_j') \in X_i \times X_i \times X_j \times X_j$ for which either
$(x_i,x_i') \notin E(G)$ or $(x_j,x_j') \notin E(G)$. Thus, $G$ contains at least
$0.5\gamma^{d}|X_i|^2|X_j|^2 \geq 0.5 \cdot 10^{-4}\alpha^{12}\gamma^d n^4$ induced copies of \nolinebreak $C_4$.
\end{proof}

We finish this section with the following corollary of the above structure theorem, which will be more convenient to use when proving Theorems \ref{thm:c4} and \ref{thm:chordal} in the next section.

\begin{lemma}\label{lemma:cond_regularity}
There is an absolute constant $c > 0$
%(???Is this the same $c$ from previous lemma??? Yes)
such that for every
$\alpha,\gamma \in (0,1)$, every $n$-vertex graph $G$ either contains $\Omega(\alpha^c\gamma^c n^4)$ induced copies of $C_4$ or there is a graph $G'$ on $V(G)$, a partition
$V(G) = X_1 \cup \dots \cup X_k \cup Y$, where $k \leq 10\alpha^{-3}$, a subset
$Z \subseteq X := X_1 \cup \dots \cup X_k$,
a partition $X \setminus Z = Q_1 \cup \dots \cup Q_q$ which refines
$\{ X_1\setminus Z,\dots,X_k \setminus Z \}$, and subsets $W_i \subseteq Q_i$ with the following properties.
\begin{enumerate}
%\item $k \leq 10\alpha^{-3}$.
\item $G'[X_i \setminus Z]$ is a clique for every $1 \leq i \leq k$, and $G'[Y]$ is an independent set.
\item $|Z| < \alpha n$ and every $z \in Z$ is an isolated vertex in $G'$.
\item In $G'$, the sum of $|Q_i||Q_j|$ over all non-homogeneous pairs $(Q_i,Q_j)$,
$1 \leq i < j \leq q$, is at most $\alpha n^2$.
\item $(W_i,W_j)$ is homogeneous in $G'$ for every $1 \leq i < j \leq q$ and
$|W_i| \geq (\alpha/20)^{4000\alpha^{-6}} |X|$ for every $1 \leq i \leq q$.
\item
$\left| E(G') \triangle E(G) \right| < (2\alpha + \gamma)n^2$ and
$\left| E(G'[X \setminus Z]) \, \triangle \, E(G[X \setminus Z]) \right| < \gamma n^2$.
\end{enumerate}
\end{lemma}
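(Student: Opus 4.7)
The plan is to deduce this from Lemma \ref{lemma:main} by post-processing the partition it produces, using Lemma \ref{lemma:homog_part_strong} to equip the inner structure on $X_1\cup\dots\cup X_k$. I would use the same constant $c$ as in Lemma \ref{lemma:main}. First I would apply Lemma \ref{lemma:main} to $G$ with parameters $\alpha$ and $\gamma$; if it yields $\Omega(\alpha^c\gamma^c n^4)$ induced copies of $C_4$ we are done, so assume we obtain a partition $V(G)=X_1\cup\dots\cup X_k\cup Y$ satisfying items 1--3 of Lemma \ref{lemma:main}. Since each $|X_i|\geq 0.1\alpha^3 n$, the bound $k\leq 10\alpha^{-3}$ is automatic.

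Next I would form an auxiliary graph $\widetilde G$ from $G$ by three local modifications: inside each $X_i$, add the missing edges so it becomes a clique (cost at most $\gamma\binom{|X_i|}{2}$ per $i$ since $d(X_i)\geq 1-\gamma$); for each pair $i<j$, perform a cheapest modification of the bipartite graph $(X_i,X_j)$ that makes it induced $M_2$-free (cost at most $\gamma|X_i||X_j|$ per pair since $(X_i,X_j)$ is $\gamma$-close to induced $M_2$-free); and delete every edge inside $Y$ (cost strictly less than $\alpha n^2$ since $e(Y)<\alpha n^2$). Edges between $Y$ and $X:=X_1\cup\dots\cup X_k$ are left untouched. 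Summing the three contributions, $\widetilde G$ differs from $G$ on fewer than $(\alpha+\gamma)n^2$ edges, and differs from $G$ only on fewer than $\gamma n^2$ edges inside $X$.

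Now $\widetilde G[X]$ has its vertex set partitioned into cliques $X_1,\dots,X_k$ with every pair $(X_i,X_j)$ induced $M_2$-free, so I would apply Lemma \ref{lemma:homog_part_strong} to $\widetilde G[X]$ with parameter $\delta=\alpha$. This yields a set $Z\subseteq X$ with $|Z|<\alpha|X|\leq\alpha n$, a partition $X\setminus Z=Q_1\cup\dots\cup Q_q$ refining $\{X_1\setminus Z,\dots,X_k\setminus Z\}$, and subsets $W_i\subseteq Q_i$ such that each pair $(W_i,W_j)$ is homogeneous in $\widetilde G[X]$. The sum of $|Q_i||Q_j|$ over non-homogeneous pairs is at most $\alpha|X|^2\leq\alpha n^2$, and plugging $k\leq 10\alpha^{-3}$ into the size bound of Lemma \ref{lemma:homog_part_strong} gives
\[
|W_i|\geq(\alpha/2k)^{10k^2}|X|\geq(\alpha^4/20)^{1000\alpha^{-6}}|X|\geq(\alpha/20)^{4000\alpha^{-6}}|X|,
\]
where the last step is a routine exponent comparison (the extra factor of $20^{3000\alpha^{-6}}$ absorbs the discrepancy).

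Finally I would define $G'$ from $\widetilde G$ by deleting every edge incident to $Z$; this removes at most $|Z|\cdot n<\alpha n^2$ further edges and makes item 2 hold. Items 1, 3, 4 are immediate from the construction and from Lemma \ref{lemma:homog_part_strong}, since edges inside $X\setminus Z$ agree in $\widetilde G$ and $G'$; summing the three modification steps and the isolation of $Z$ gives $|E(G')\triangle E(G)|<(2\alpha+\gamma)n^2$ and $|E(G'[X\setminus Z])\triangle E(G[X\setminus Z])|<\gamma n^2$, which is item 5. The proof is essentially a composition of the two earlier lemmas; the only non-routine point is the careful bookkeeping for item 5 and the verification of the $(\alpha/20)^{4000\alpha^{-6}}$ lower bound on $|W_i|$ from the $(\alpha/2k)^{10k^2}$ output of Lemma \ref{lemma:homog_part_strong}.
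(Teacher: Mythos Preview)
Your proposal is correct and follows essentially the same route as the paper: apply Lemma~\ref{lemma:main}, build the intermediate graph $\widetilde G$ (the paper's $G''$) by turning each $X_i$ into a clique, repairing each pair $(X_i,X_j)$ to be induced $M_2$-free, and emptying $Y$; then apply Lemma~\ref{lemma:homog_part_strong} to $\widetilde G[X]$ with $\delta=\alpha$, isolate $Z$ to obtain $G'$, and verify items 1--5 with the same edge-count bookkeeping and the same chain $(\alpha/2k)^{10k^2}\geq(\alpha^4/20)^{1000\alpha^{-6}}\geq(\alpha/20)^{4000\alpha^{-6}}$.
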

\begin{proof}
The constant $c$ in this lemma is the same as in Lemma \ref{lemma:main}.
Apply Lemma \ref{lemma:main} to $G$ with the given $\alpha$ and $\gamma$. If $G$ contains
$\Omega \left( \alpha^{c}\gamma^{c} n^4\right)$ induced copies of $C_4$ then the assertion of the lemma holds, and otherwise let $X_1,\dots,X_k,Y$ be as in the statement of Lemma \ref{lemma:main}. Note that $k \leq 10\alpha^{-3}$ since $|X_i| \geq 0.1\alpha^3$ for every $1 \leq i \leq k$.
Let $G''$ be the graph obtained from $G$ by making $Y$ an independent set, making $X_1,\dots,X_k$ cliques and making $(X_i,X_j)$ induced $M_2$-free for every $1 \leq i < j \leq k$. By Lemma \ref{lemma:main} we have $|E(G''[Y]) \triangle E(G[Y])| < \alpha n^2$ and
$|E(G''[X]) \triangle E(G[X])| < \gamma \sum_{i=1}^{k}\binom{|X_i|}{2} + \gamma \sum_{i<j}|X_i||X_j| < \gamma n^2$.
%the number of changes inside $Y$ is less than $\alpha n^2$ and the number of changes inside
%$X = X_1 \cup \dots X_k$ is less than
%$\gamma \sum_{i=1}^{k}\binom{|X_i|}{2} + \gamma \sum_{i<j}|X_i||X_j| <
%\gamma n^2$.
We now apply Lemma \ref{lemma:homog_part_strong} to
$G''[X]$ with parameter $\delta = \alpha$ (and with respect to the partition $\{X_1,\ldots,X_k\}$) and obtain
a subset $Z \subseteq X$ of size $|Z| < \alpha |X| \leq \alpha n$, a partition
$X \setminus Z = Q_q \cup \dots \cup Q_q$ which refines
$\{ X_1\setminus Z,\dots,X_k \setminus Z \}$, and subsets $W_i \subseteq Q_i$ such that
$|W_i| \geq (\alpha/2k)^{10k^2}|X| \geq (\alpha^4/20)^{1000\alpha^{-6}}|X| \geq
(\alpha/20)^{4000\alpha^{-6}}|X|$ for every $1 \leq i \leq q$.

Let $G'$ be the graph obtained from $G''$ by making every $z \in Z$ an isolated vertex. Then Item 2 is satisfied. The second part of Item 5 holds because $G'[X \setminus Z] = G''[X \setminus Z]$ and $|E(G''[X]) \triangle E(G[X])| < \gamma n^2$. For the first part of Item 5,
note that
$\left| E(G') \triangle E(G'') \right| < |Z|n < \nolinebreak \alpha n^2$, which implies that
$\left| E(G') \triangle E(G) \right| \leq \left| E(G') \triangle E(G'') \right| + \left| E(G'') \triangle E(G) \right| < (2\alpha + \gamma)n^2$.
Since $G'[X \setminus Z] = G''[X \setminus Z]$ and $G'[Y] = G''[Y]$, it is enough to establish that
Items 1, 3 and 4 hold if $G'$ is replaced by $G''$. For Item 1, this is immediate from the definition of $G''$; for items 3-4, this follows from our choice of $\Q = \{Q_1,\dots,Q_q\}$ and $W_1,\dots,W_q$ via Lemma \ref{lemma:homog_part_strong} (with parameter $\delta = \alpha$).

\end{proof}

\section{Proofs of main results}\label{sec:main}

In this section we prove Theorems \ref{thm:c4} and \ref{thm:chordal}.
The last ingredient we need is the following key lemma.

\begin{lemma}\label{lemma:ind_set}
Let $\F$ be a (finite or infinite) family of graphs such that
\begin{enumerate}
\item $C_4 \in \F$.
\item For every $F \in \F$ and $v \in V(F)$, the neighbourhood of $v$ in $F$ is not a clique.
\end{enumerate}
Suppose $G$ is a graph with vertex partition
$V(G) = X \cup Y$ such that $Y$ is an independent set and $G[X]$ is induced $\F$-free.
Then, if one must add/delete at least $\varepsilon |X||Y|$ of the edges between $X$ and $Y$
to make $G$ induced $\F$-free, then $G$ contains at least
$\frac{\varepsilon^4}{2^8}|X|^2|Y|^2$ induced copies of $C_4$.
\end{lemma}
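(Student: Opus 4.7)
My proof plan has three steps: a reduction using property~(2) that bounds the edit distance by a ``make every $N_X(y)$ a clique'' strategy, a translation of this bound into a codegree count on non-edges of $G[X]$, and a Jensen/Cauchy--Schwarz argument that extracts the desired number of induced $C_4$'s.

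I would first show that if $A(y) := N_X(y)$ is a clique of $G[X]$ for every $y \in Y$, then $G$ is induced $\F$-free. Indeed, since $G[X]$ is induced $\F$-free, any induced copy of some $F \in \F$ in $G$ must contain a vertex $y \in Y$; as $Y$ is independent, the neighbors of $y$ in the copy lie in $A(y)$ and hence form a clique in $G$, contradicting property~(2) applied to the vertex $v \in V(F)$ corresponding to $y$, whose neighborhood in $F$ is by assumption not a clique. Consequently, replacing each $N_X(y)$ by a maximum clique $K(y) \subseteq A(y)$ of $G[A(y)]$ --- at total cost $\sum_{y \in Y}(|A(y)| - \omega(G[A(y)]))$ --- yields an induced $\F$-free graph. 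Combined with the hypothesis, $\sum_y (|A(y)| - \omega(G[A(y)])) \geq \varepsilon|X||Y|$, and in particular $e(X,Y) \geq \varepsilon|X||Y|$.

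Next I would translate this into a codegree count. For each $y$ and each $x \in A(y) \setminus K(y)$, maximality of $K(y)$ provides a non-neighbor of $x$ inside $K(y)$, and these non-edges are distinct as $x$ varies, so the number $P(y)$ of non-edges in $G[A(y)]$ satisfies $P(y) \geq |A(y)| - \omega(G[A(y)])$. Writing $c_{x,x'} := |N_Y(x) \cap N_Y(x')|$ for each non-adjacent pair $\{x,x'\}$ of $G[X]$ and switching summations,
\[
  \sum_{\{x,x'\} \text{ non-edge of } G[X]} c_{x,x'} \;=\; \sum_{y \in Y} P(y) \;\geq\; \varepsilon|X||Y|.
\]
The induced $C_4$'s of ``$2$-$2$'' type in $G$ (those with two vertices in each of $X, Y$) are in bijection with pairs $(\{x,x'\},\{y,y'\})$ where $\{x,x'\}$ is a non-edge of $G[X]$ and $y \neq y'$ both lie in $N_Y(x) \cap N_Y(x')$, so their number is $T = \sum_{\{x,x'\}\text{ non-edge}} \binom{c_{x,x'}}{2}$, and it suffices to prove $T \geq \varepsilon^4 |X|^2 |Y|^2 / 2^8$.

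The final step --- which I expect to be the main obstacle --- is to lower-bound $T$. Two complementary bounds can be combined. A direct Cauchy--Schwarz on the codegrees over non-edges gives $T \geq \varepsilon^2 |X|^2 |Y|^2/(2M) - \varepsilon|X||Y|/2$, where $M$ is the number of non-edges of $G[X]$; this is efficient when $M$ is small, i.e.\ $G[X]$ is dense. For the complementary sparse regime, $e(X,Y) \geq \varepsilon|X||Y|$ together with Jensen on the $Y$-side gives $\sum_{\text{all pairs}} c_{x,x'} = \sum_y \binom{|A(y)|}{2} \geq \varepsilon^2 |X|^2 |Y|/4$, and a second convexity pass on the $X$-side produces roughly $\varepsilon^4 |X|^2 |Y|^2/32$ many bipartite $K_{2,2}$'s in $(X,Y)$; since most pairs in $\binom{X}{2}$ are non-edges in this regime (and using that $G[X]$ is induced $C_4$-free to bound the $K_{2,2}$-mass sitting on edges of $G[X]$, e.g.\ via Theorem~\ref{thm:GHS}), the bulk of this mass contributes to $T$. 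A clean case analysis with a suitable threshold on the density of $G[X]$ should recover the claimed factor $1/2^8$.
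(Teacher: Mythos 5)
Your first step matches the paper's: making each $N_X(y)$ a clique produces an induced $\F$-free graph (using property~(2) to rule out any copy using a vertex of $Y$, and $\F$-freeness of $G[X]$ to rule out copies inside $X$), hence the number of cross edges that must change is at least $\varepsilon|X||Y|$. Your reformulation of the induced $C_4$-count as $T = \sum_{\{x,x'\}\text{ non-edge of }G[X]} \binom{c_{x,x'}}{2}$ is also the same as the paper's.

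The gap is in the middle step, and I don't think the final case analysis can fix it. From the maximal clique $K(y)$ you only obtain the \emph{linear} bound $P(y) \geq |A(y)| - \omega(G[A(y)])$, hence $\sum_y P(y) \geq \varepsilon|X||Y|$. The paper instead picks a maximal collection $\M(y)$ of pairwise-disjoint non-edges in $A(y)$; deleting the $2|\M(y)|$ edges from $y$ to the endpoints of $\M(y)$ already leaves a clique, so the per-$y$ edit cost is at most $2|\M(y)|$, \emph{and} the $C_4$-freeness of $G[X]$ forces at least one non-edge between any two members of $\M(y)$, giving the \emph{quadratic} bound $P(y) \geq \binom{|\M(y)|}{2} + |\M(y)| \geq |\M(y)|^2/2$. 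Applying Jensen once over $y$ then yields $\sum_y P(y) \geq \tfrac{\varepsilon^2}{8}|X|^2|Y|$, an extra factor of roughly $\varepsilon|X|$ that you never recover. Your argument (one non-neighbor in $K(y)$ per outside vertex) never uses that $G[X]$ is $C_4$-free, which is exactly the mechanism that supplies the quadratic gain.

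The compensating case analysis does not close the gap. In the ``dense'' regime, the inequality $T \geq \tfrac{(\sum c)^2}{2M} - \tfrac{1}{2}\sum c$ only beats $\tfrac{\varepsilon^4}{2^8}|X|^2|Y|^2$ when $M = O(\varepsilon^{-2})$ is an absolute constant, not merely when $G[X]$ is dense in any useful relative sense. In the ``sparse'' regime, the double-Jensen estimate $\sum_{\text{all pairs}}\binom{c_{x,x'}}{2} \gtrsim \varepsilon^4|X|^2|Y|^2$ counts $K_{2,2}$'s across \emph{all} pairs of $\binom{X}{2}$; those that sit on edges of $G[X]$ are $K_4$'s minus a $Y$-edge and are not induced $C_4$'s, and I see no way to bound their contribution via Theorem~\ref{thm:GHS} (which gives a large clique in a dense $C_4$-free graph, not an upper bound on codegree mass concentrated on $E(G[X])$). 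The codegree mass on edges of $G[X]$ is simply not controlled by the hypotheses in the way your sketch needs, and a convexity argument over all pairs cannot distinguish edge pairs from non-edge pairs. The anti-matching device is what makes the two Jensen applications land on the target exponent, and something equivalent to it (a quadratic-in-edit-cost lower bound on $P(y)$ using $C_4$-freeness of $G[X]$) is needed here.
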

\begin{proof}
%We assume, for contradiction, that one must change at least
%$\varepsilon |A||B|$ of the edges between $A$ and $B$ in order to turn $G$ into a graph that satisfies $\Prop$.

%For a graph $G'$ on $V(G)$, let $\T(G')$ be the set of triples $(y,u,v)$
%such that $y \in Y$, $u,v \in X$, $u \neq v$,
%$(u,y), (v,y) \in E(G')$ and $(u,v) \notin \nolinebreak E(G')$.

Let us pick for every $y \in Y$ a maximal anti-matching ${\cal M}(y)$ in $G[N_X(y)]$,
that is, a maximal collection of pairwise-disjoint non-edges contained in $N_X(y)$.
%Set $m(y)$ denote the vertices spanning ${\cal M}(y)$ and note that $|M(y)|=|{\cal M}(y)|$.
For every pair of non edges $(u,v),(u',v') \in {\cal M}(y)$, there must be at least one non-edge between $\{u,v\}$ and $\{u',v'\}$, as otherwise
$u,v,u',v'$ would span an induced $C_4$ in $X$, in contradiction to the assumptions that $G[X]$ is induced $\F$-free and $C_4 \in \F$.
Therefore, for every $y$ there are at least $\binom{|{\cal M}(y)|}{2} + |{\cal M}(y)| \geq |{\cal M}(y)|^2/2$ non-edges inside the set $N_X(y)$.
For every $y \in Y$ let $d_2(y)$ denote the number of pairs of distinct vertices in $N_X(y)$ that are non-adjacent. Then the above discussion implies that
every $y \in Y$ satisfies
\begin{equation}\label{eq:good_triples}
d_2(y) \geq \frac{|{\cal M}(y)|^2}{2}\;.
\end{equation}

Let $G'$ be the graph obtained from $G$ by deleting, for every $y \in Y$, all edges going between $y$ and the vertices of ${\cal M}(y)$.
Since ${\cal M}(y)$ is spanned by $2|{\cal M}(y)|$ vertices, we have
\begin{equation}\label{eq:diff}
|E(G') \triangle E(G)| = 2\sum_{y \in Y}|{\cal M}(y)|\;.
\end{equation}
We now claim that $G'$ is induced $\F$-free. Indeed, suppose $U \subseteq V(G)$ spans an induced copy of some $F \in \F$.
Since by assumption $G[X]$ is induced $\F$-free and since $G'[X] = G[X]$, there must be some $y \in U \cap Y$.
Since the neighbourhood of $y$ in $F$ is not a clique and since $G'[Y] = G[Y]$ is an empty graph, there must be $u,v \in U \cap X$ for which
$u,v \in N_X(y)$ and $(u,v) \notin E(G')$. Now, the fact that $u,v$ are connected to $y$ in $G'$ means that neither of them participated in
one of the non-edges of ${\cal M}(y)$. But then the fact that $(u,v) \notin E(G')$ implies that also $(u,v) \notin E(G)$ (because we did not change $G[X]$)
which in turn implies that $(u,v)$ could have been added to ${\cal M}(y)$ contradicting its maximality.

By the assumption of the lemma we thus have
$|E(G') \triangle E(G)| \geq \varepsilon |X||Y|$.
Combining this with \eqref{eq:good_triples}, \eqref{eq:diff} and Jensen's inequality thus gives
\begin{equation*}
\sum_{y \in Y}d_2(y) \geq \frac{1}{2}\sum_{y \in Y}{|{\cal M}(y)|^2} \geq
\frac{1}{2}|Y| \cdot \left( \frac{\sum_{y \in Y}{|{\cal M}(y)|}}{|Y|} \right)^2 =
\frac{1}{2}|Y| \cdot \left( \frac{|E(G') \triangle E(G)|}{2|Y|} \right)^2 \geq
\frac{\varepsilon^2}{8}|X|^2|Y|.
\end{equation*}

For a pair of distinct vertices $u,v \in X$ set $t(u,v)=0$ if $(u,v) \in E(G)$ and otherwise set $t(u,v)$
to be the number of vertices $y \in Y$ connected to both $u$ and $v$. Recalling that $Y$ is an independent set in $G$, we see that $u,v$
belong to at least ${t(u,v) \choose 2}$ induced copies of $C_4$. Hence, $G$ contains at least
\begin{eqnarray*}
\sum_{u,v \in X}{\binom{t(u,v)}{2}} &\geq& \binom{|X|}{2} \cdot \binom{\sum_{u,v \in X}t(u,v)/\binom{|X|}{2}}{2}\\
&=& \binom{|X|}{2} \cdot \binom{\sum_{y \in Y}d_2(y)/\binom{|X|}{2}}{2}\\
&\geq& \frac{|X|^2}{4} \cdot \frac{(\varepsilon^2|Y|/4)^2}{4} = \frac{\varepsilon^4}{2^8}|X|^2|Y|^2,
\end{eqnarray*}
induced copies of $C_4$, where the first inequality is Jensen's, the following equality is double-counting, and the last inequality
uses our above lower bound for $\sum_{y \in Y}d_2(y)$.
\end{proof}

%$\{u,v\} \in \binom{X}{2}$, set
%$T(u,v) = \left\{ y \in Y : (y,u,v) \in \T(G) \right\}$ and
%$t(u,v) = |T(u,v)|$. For every pair of distinct $y,y' \in T(u,v)$, the vertices $y,y',u,v$ span an induced copy of $C_4$ in $G$, since $Y$ is an independent %set. Thus, there are at least
%$\sum_{u,v \in X}{\binom{t(u,v)}{2}}$ induced copies of $C_4$ in $G$. Note that $\sum_{u,v \in X}{t(u,v)} = |\T(G)| \geq \frac{\varepsilon^2}{8}|X|^2|Y|$. By %Jensen's inequality, the number of induced copies of $C_4$ in $G$ is at least
%Note that the graph families $\F = \{C_4\}$ and $\F = \{C_{\ell} : \ell \geq 4\}$ (the family of forbidden induced subgraphs for chordality) satisfy %Conditions 1-2 of Lemma \ref{lemma:ind_set}.

We are now ready to prove Theorems \ref{thm:c4} and \ref{thm:chordal}.

\begin{proof}[Proof of Theorem \ref{thm:c4}]
Set
\begin{equation*}
\alpha = \frac{\varepsilon^6}{2^{11}}, \; \; \;
\gamma =
\frac{1}{2} (\alpha/20)^{16000\alpha^{-6}} (\varepsilon/2)^4.
\end{equation*}
and notice that $\gamma \geq 2^{-(1/\varepsilon)^{c'}}$ for some absolute constant $c'$.
% We choose $\varepsilon_0$ to be small enough to guarantee that $\alpha < \alpha_0$, where $\alpha_0$ is the constant from Lemma \ref{lemma:main}.
We apply Lemma \ref{lemma:cond_regularity} to $G$ with the $\alpha$ and $\gamma$ defined above. If $G$ contains
$\Omega\left(\alpha^{c}\gamma^{c} n^4\right)$ induced copies of $C_4$ then we are done. Otherwise, let $G'$, $X = X_1 \cup \dots \cup X_k$, $Y$, $Z$,
$\Q = \{Q_1,\dots,Q_q\}$ and $W_i \subseteq Q_i$ be as in Lemma \ref{lemma:cond_regularity}.
%Since $2\alpha + \gamma < \dots$, Item 3 in Corollary \ref{cor:cond_regularity} implies that $G'$ is $\dots$-far from being induced $C_4$-free.
Let $G''$ be the graph obtained from $G'$ by doing the following: for every $1 \leq i < j \leq q$, if $(W_i,W_j)$ is a complete (resp. empty) bipartite graph then we turn $(Q_i,Q_j)$ into a complete (resp. empty) bipartite graph. By Item 4 in Lemma \ref{lemma:cond_regularity}, one of these options holds. By Item 3 in Lemma \ref{lemma:cond_regularity}, the number of changes made is at most $\alpha n^2$. By Item 5 in Lemma \ref{lemma:cond_regularity} we have
$\left| E(G'') \triangle E(G) \right| \leq
\left| E(G'') \triangle E(G') \right| + \left| E(G') \triangle E(G) \right| <
(3\alpha + \gamma)n^2 < \frac{\varepsilon}{2} n^2$, implying that $G''$ is $\frac{\varepsilon}{2}$-far from being induced $C_4$-free. Note that
$|X \setminus Z| \geq \frac{\varepsilon}{2}n$, as otherwise deleting all edges incident to the vertices of $X \setminus Z$ would make $G''$ an empty graph (and hence induced $C_4$-free) by deleting $|X \setminus Z| \cdot n \leq \frac{\varepsilon}{2}n^2$ edges.

Let us assume first that $G''[X \setminus Z]$ contains an induced copy of $C_4$, say on the vertices $v_1,v_2,v_3,v_4$. For $1 \leq s \leq 4$, let $i_{s}$ be such that $v_{s} \in Q_{i_{s}}$.
%Note that $i_s \neq i_t$ for $s \neq t$ because $Q_1,\dots,Q_q$ are cliques, all pairs $(Q_i,Q_j)$ are homogeneous in $G''$, and $C_4$ does not contain two adjacent vertices with the same neighbourhood.
It is easy to see that by the definition of $G''$, every quadruple
$(w_1,\dots,w_4) \in W_{i_{1}} \times W_{i_{2}} \times W_{i_{3}} \times W_{i_4}$ spans an induced copy of $C_4$ {\em in the graph $G'$}. By Item 4 in Lemma \ref{lemma:cond_regularity}, $G'$ contains
$$|W_{i_{1}}| \cdot |W_{i_{2}}| \cdot |W_{i_{3}}| \cdot |W_{i_{4}}| \geq
(\alpha/20)^{16000\alpha^{-6}}|X|^4 \geq
(\alpha/20)^{16000\alpha^{-6}} (\varepsilon/2)^4n^4 = \nolinebreak 2\gamma n^4$$ induced copies of $C_4$. By Item 5 in Lemma \ref{lemma:cond_regularity}, $G[X \setminus Z]$ and $G'[X \setminus Z]$ differ on less than $\gamma n^2$ edges, each of which can participate in at most $n^2$ induced copies of $C_4$. Thus, $G$ contains at least $\gamma n^4$ induced copies of $C_4$, as required.

%Observe that $|X| \geq \frac{\varepsilon }{2}n$, as otherwise $G$ contains at most
%$e(Y) + |X|n < \varepsilon n^2$ edges, and is thus $\varepsilon$-close to being induced $C_4$-free.

From now on we assume that $G''[X \setminus Z]$ is induced $C_4$-free, implying that $G''[X]$ is induced $C_4$-free (as every $z \in Z$ is isolated in $G''$).
Since $G''$ is $\frac{\varepsilon}{2}$-far from being induced $C_4$-free, one cannot make $G''$ induced $C_4$-free by adding/deleting less than $\frac{\varepsilon}{2} n^2 \geq \varepsilon |X||Y|$ edges between $X$ and $Y$. In particular, we have $|X||Y| \geq \varepsilon n^2$.
%because otherwise one could delete all edges between $X$ and $Y$ and thus make $G''$ induced $C_4$-free with less than $\frac{\varepsilon}{2} n^2$ edge changes.
Notice that the conditions of Lemma \ref{lemma:ind_set} hold (with respect to the family ${\cal F}=\{C_4\}$) since $G''[Y] = G'[Y]$ is an independent set (by Item 1 in Lemma \ref{lemma:cond_regularity}) and $G''[X]$ is induced $C_4$-free by assumption.
By Lemma \ref{lemma:ind_set}, $G''$ contains at least
$\frac{\varepsilon^4}{2^8} |X|^2|Y|^2 \geq \frac{\varepsilon^6}{2^8}n^4 = 8\alpha n^4$ induced copies of $C_4$.
Since $\left| E(G'') \triangle E(G) \right| < (3\alpha + \gamma)n^2 < 4\alpha n^2$, at least
$4\alpha n^4 = \frac{\varepsilon^6}{2^{9}}n^4$ of these copies are also present in $G$. This completes the proof of the theorem.
\end{proof}

\begin{proof}[Proof of Theorem \ref{thm:chordal}]
Set
\begin{equation*}
\alpha = \frac{\varepsilon^{6}}{2^{11}}, \; \; \;
\gamma =
\frac{1}{2} (\alpha/20)^{10^5\alpha^{-9}} (\varepsilon/2)^{20\alpha^{-3}}.
\end{equation*}
and notice that $\gamma \geq 2^{-(1/\varepsilon)^{c'}}$ for some absolute constant $c'$.
% We choose $\varepsilon_0$ to be small enough to guarantee that $\alpha < \alpha_0$, where $\alpha_0$ is the constant from Lemma \ref{lemma:main}.
As in the proof of Theorem \ref{thm:c4}, we apply Lemma \ref{lemma:cond_regularity} to $G$ with the $\alpha$ and $\gamma$ defined above. If $G$ contains
$\Omega \left( \alpha^{c}\delta^{c} n^4\right)$ induced copies of $C_4$ then we are done. Otherwise, let $G'$, $X = X_1 \cup \dots \cup X_k$, $Y$, $Z$,
$\Q = \{Q_1,\dots,Q_q\}$ and $W_i \subseteq Q_i$ be as in Lemma \ref{lemma:cond_regularity}.
%Since $2\alpha + \gamma < \dots$, Item 3 in Corollary \ref{cor:cond_regularity} implies that $G'$ is $\dots$-far from being induced $C_4$-free.

Let $G''$ be the graph obtained from $G'$ by doing the following: for every $1 \leq i < j \leq q$, if $(W_i,W_j)$ is a complete (resp. empty) bipartite graph then we make $(Q_i,Q_j)$ a complete (resp. empty) bipartite graph.
As in the proof of Theorem \ref{thm:c4}, $G''$ is $\frac{\varepsilon}{2}$-far from being chordal, and we have $|X \setminus Z| \geq \frac{\varepsilon}{2}n$.
%By Item 3 in Lemma \ref{lemma:cond_regularity}, the number of changes made is at most $\alpha n^2$. By Item 5 in Lemma \ref{lemma:cond_regularity} we have
%$\left| E(G'') \triangle E(G) \right| \leq
%\left| E(G'') \triangle E(G') \right| + \left| E(G') \triangle E(G) \right| <
%(3\alpha + \gamma)n^2 < \frac{\varepsilon}{2} n^2$, implying that $G''$ is $\frac{\varepsilon}{2}$-far from being chordal.
%As in the proof of Theorem \ref{thm:C4}, we have $|X| \geq \frac{\varepsilon}{2}n$.

Assume first that $G''[X \setminus Z]$ is not chordal, namely that it contains an induced cycle
$C = v_1 \dots v_{\ell}$ of length $\ell \geq 4$. By Item 1 in Lemma \ref{lemma:cond_regularity}, $G''[X_i \setminus Z] = G'[X_i \setminus Z]$ is a clique for every $1 \leq i \leq k$.
Since the cycle $C$ does not contain a triangle, it can contain at most 2 vertices from each of these cliques, implying that
$\ell = |C| \leq 2k \leq 20\alpha^{-3} = O(\varepsilon^{-18})$. The bound on $k$ comes from Lemma \ref{lemma:cond_regularity}.
For $1 \leq s \leq \ell$, let $i_s$ be such that
$v_s \in Q_{i_s}$. It is easy to see that by the definition of $G''$, $\ell$-tuple
$(w_1,\dots,w_{\ell}) \in W_{i_{1}} \times \dots \times W_{i_{\ell}}$ spans an induced $\ell$-cycle {\em in the graph $G'$}. By Item 4 in Lemma \ref{lemma:cond_regularity}, $G'$ contains
%$$|W_{i_{1}}| \dots |W_{i_{\ell}}| \geq (40\alpha)^{5600\ell/\alpha^6}|X|^{\ell} \geq
%(40\alpha)^{10^6/\alpha^{9}} (\varepsilon/2)^{20/\alpha^3}n^{\ell} = 2\gamma n^4$$
$$\prod\limits_{j=1}^{\ell}|W_{i_j}| \geq (\alpha/20)^{4000\alpha^{-6}\ell}|X|^{\ell} \geq
(\alpha/20)^{10^5\alpha^{-9}} (\varepsilon/2)^{20\alpha^{-3}}n^{\ell} = 2\gamma n^{\ell}$$
induced copies of $C_{\ell}$. By Item 5 in Lemma \ref{lemma:cond_regularity}, $G[X]$ and $G'[X]$ differ on less than $\gamma n^2$ edges, each of which can participate in at most $n^{\ell - 2}$ induced copies of $C_{\ell}$. Thus, $G$ contains at least $\gamma n^{\ell}$ induced copies of $C_{\ell}$, as required.

We now assume that $G''[X]$ is chordal. Since $G''$ is $\frac{\varepsilon}{2}$-far from being chordal, one must add/delete at least $\frac{\varepsilon}{2}n^2 \geq \varepsilon |X||Y|$ of the edges between $X$ and $Y$ to make $G''$ chordal. In particular, we have $|X||Y| \geq \varepsilon n^2$.
Note that the family $\mathcal{F} = \{C_{\ell} : \ell \geq 4\}$, i.e. the family of forbidden induced subgraphs for chordality, satisfies Conditions 1-2 of Lemma \ref{lemma:ind_set}.
Observe that Lemma \ref{lemma:ind_set} is applicable to $G''$
(with respect to the family $\F = \{C_{\ell} : \ell \geq 4\}$), as $G''[Y] = G'[Y]$ is an independent set (by Item 1 in Lemma \ref{lemma:cond_regularity}), and $G''[X]$ is induced $\mathcal{F}$-free (i.e. chordal) by assumption.
By Lemma \ref{lemma:ind_set},
$G''$ contains at least
$\frac{\varepsilon^4}{2^8} |X|^2|Y|^2 \geq \frac{\varepsilon^6}{2^{8}}n^4 = 8\alpha n^4$ induced copies of $C_4$. Since $\left| E(G'') \triangle E(G) \right| < 4\alpha n^2$, at least
$4\alpha n^4 = \frac{\varepsilon^6}{2^{9}}n^4$ of these copies are also present in $G$.
\end{proof}

\section{An impossibility result}\label{sec:example}

In this section we prove Theorem \ref{thm:hard0}.
It will in fact be more convenient to prove the following equivalent statement.

\begin{theorem}\label{thm:hard}
For every function $g : (0,\frac{1}{2}) \rightarrow \mathbb{N}$ there is a graph family $\F$ which contains $C_4$ and there is a sequence $\{\varepsilon_k\}_{k=1}^{\infty}$ with $\varepsilon_k > 0$ and $\varepsilon_k \rightarrow 0$, such the following holds. For every $k \geq 1$ and $n \geq n_0(k)$ there is an $n$-vertex graph $G$ which is $\varepsilon_k$-far from being induced $\F$-free, but still every induced subgraph of $G$ on $g(\varepsilon_k)$ vertices is induced $\F$-free.
\end{theorem}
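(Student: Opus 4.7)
The plan is to construct, from the given decreasing function $g$, a family $\F = \{C_4\} \cup \{F_k : k \geq 1\}$, a sequence $\varepsilon_k \to 0$, and the example graphs $G = G_k^n$ inductively. Each $F_k$ is chosen to be a $C_4$-free graph on $m_k$ vertices---with $m_1 < m_2 < \cdots$ strictly increasing---that is \emph{true-twin-free} (no two distinct vertices share the same closed neighbourhood) and that contains no induced copy of any previously constructed $F_j$. Such $F_k$ exist in abundance: for instance, a random graph of suitable edge-density on $m_k$ vertices satisfies the three properties with positive probability, which one checks by a first moment argument. I leave the precise growth rate of $(m_k)$ to be pinned down last, once the farness $\varepsilon_k$ produced by the construction is known.

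The graph $G_k^n$ is the clique blow-up $F_k[K_r]$ with $r = \lfloor n/m_k \rfloor$: replace each vertex of $F_k$ by a clique on $r$ new vertices, and place a complete bipartite graph (respectively, no edges) between two such cliques iff the corresponding pair is (respectively, is not) an edge of $F_k$. I will verify (a) $G_k^n$ is induced $C_4$-free, by case analysis on how four vertices distribute among the cliques---if they lie in four distinct cliques they induce a $4$-vertex subgraph of $F_k$, which cannot be a $C_4$; any other distribution puts at least two vertices inside a clique of $G_k^n$, and the resulting triangle/$K_4$ configuration rules out $C_4$---and (b) $G_k^n$ has no induced copy of $F_j$ for any $j \neq k$. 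Claim (b) uses the true-twin-freeness of $F_j$: two distinct vertices inside the same clique of $G_k^n$ are true twins, so any induced copy of the twin-free $F_j$ picks at most one vertex per clique and hence embeds $F_j$ as an induced subgraph of $F_k$, which is excluded by our construction of $F_k$ (for $j < k$) or by cardinality (for $j > k$). Combining (a) and (b), every induced member of $\F$ inside $G_k^n$ is a copy of $F_k$ with exactly $m_k$ vertices, so every induced subgraph of $G_k^n$ on fewer than $m_k$ vertices is induced $\F$-free.

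For the farness, by true-twin-freeness of $F_k$ again, the $r^{m_k}$ transversals of the cliques give exactly $r^{m_k}$ distinct induced copies of $F_k$ inside $G_k^n$. A single edge modification disrupts at most $r^{m_k-2}$ of these copies (namely those transversals containing both modified endpoints), and within-clique modifications disrupt none since a transversal picks only one vertex per clique. Hence $\varepsilon n^2$ modifications destroy at most $\varepsilon n^2 \cdot r^{m_k-2}$ copies, which is strictly less than $r^{m_k}$ once $\varepsilon < 1/(2m_k^2)$. I therefore set $\varepsilon_k := 1/(2m_k^2)$, conclude that $G_k^n$ is $\varepsilon_k$-far from being induced $\F$-free (because some original induced copy of $F_k$ survives all the modifications), and pick $m_k$ inductively large enough so that $m_k > g(\varepsilon_k) = g(1/(2m_k^2))$; this yields both $\varepsilon_k \to 0$ and the required upper bound $g(\varepsilon_k) < m_k$ for every $k$.

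The main obstacle will be making the inequality $m_k > g(1/(2m_k^2))$ solvable for \emph{every} $g$: the naive clique blow-up only produces $\varepsilon_k = \Theta(1/m_k^2)$, which is enough whenever $g$ grows slower than $1/\sqrt{\varepsilon}$ but fails for faster-growing $g$. To cover arbitrary $g$ one enhances the construction by replacing the complete bipartite graphs between cliques with carefully designed $C_4$-free bipartite structures (for instance, half-graph-type patterns) that force many more modifications to destroy each transversal copy of $F_k$, while preserving claims (a) and (b). The twin-based analysis ruling out induced copies of the other $F_j$'s carries over unchanged, so the only technical issue in extending to arbitrary $g$ is the counting bound for the farness.
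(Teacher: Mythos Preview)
You have correctly identified the crux of the difficulty: with your construction the farness is $\varepsilon_k = \Theta(1/m_k^2)$, and the requirement $m_k > g(1/(2m_k^2))$ is unsolvable once $g$ grows at least like $1/\sqrt{\varepsilon}$. Your proposed fix---replacing the complete bipartite graphs in the blow-up by ``half-graph-type patterns'' to boost the farness---does not work as stated. Once the inter-clique bipartite graphs are no longer complete, a transversal need no longer induce a copy of $F_k$, so the very objects you are counting disappear; and any bipartite pattern that does preserve transversal copies of $F_k$ will still allow a single edge modification to kill $\Theta(r^{m_k-2})$ of them, reproducing the same $1/m_k^2$ bound. More fundamentally, it is known that for a single fixed graph $H$ the blow-up construction gives farness $\Theta(1/v(H)^2)$ and one cannot do substantially better by local tweaks; getting farness that is decoupled from the size of the witness requires a different idea.

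The paper resolves the circularity not by increasing $\varepsilon_k$ but by letting the \emph{forbidden} graphs depend on $g(\varepsilon_k)$. Concretely, set $\varepsilon_k = 1/(2k^2)$ first (depending only on $k$), put $f_k = g(\varepsilon_k)$, and take $\F = \{C_4\} \cup \{B_{k,f_k} : k \geq 5\}$ where $B_{k,f}$ is the clique blow-up of $C_k$ with cliques of size $f$. The example graph is $G = B_{k,n/k}$. The key point is that $G$ is $\varepsilon_k$-far from being induced $\{C_4, B_{k,f_k}\}$-free \emph{regardless of how large $f_k$ is}: after any $\varepsilon_k n^2$ modifications, a constant fraction of the $(n/k)^k$ transversal induced $C_k$'s survive, and by Erd\H{o}s's theorem on complete $k$-partite sub-hypergraphs this forces sets $U_1,\dots,U_k$ of size $f_k$ with the correct cross-structure; then either some $U_i$ is not a clique, producing an induced $C_4$ (here is where $C_4 \in \F$ is used essentially), or all $U_i$ are cliques and $U_1 \cup \cdots \cup U_k$ induces $B_{k,f_k}$. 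Since the smallest member of $\F$ induced in $G$ is $B_{k,f_k}$ on $k \cdot g(\varepsilon_k)$ vertices, every $g(\varepsilon_k)$-vertex induced subgraph of $G$ is $\F$-free. The decoupling of $\varepsilon_k$ from $f_k$, made possible by the two-pronged ``either $C_4$ or $B_{k,f_k}$'' argument, is precisely the idea your proposal is missing.
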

\noindent
We will need the following theorem due to Erd\H{o}s \cite{E}.
\begin{theorem}\label{thm:k_uniform_Zarankiewicz}
For every integer $f$ there is
$n_{\ref{thm:k_uniform_Zarankiewicz}} = n_{\ref{thm:k_uniform_Zarankiewicz}}(k,f)$
such that every $k$-uniform hypegraph with $n \geq n_{\ref{thm:k_uniform_Zarankiewicz}}$ vertices and $n^{k - f^{1-k}}$ edges contains a complete $k$-partite $k$-uniform hypergraph with $f$ vertices in each part.
\end{theorem}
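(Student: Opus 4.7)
The plan is to prove Theorem \ref{thm:k_uniform_Zarankiewicz} by induction on $k$ via the classical Erd\H{o}s double-counting/convexity argument, reducing a statement about $k$-uniform hypergraphs to one about $(k-1)$-uniform links. For the base case $k=2$ I would invoke the K\H{o}v\'ari--S\'os--Tur\'an theorem: a graph on $n$ vertices with at least $n^{2-1/f}$ edges contains $K_{f,f}$ as a subgraph. This itself is obtained by counting $f$-stars via $\sum_v \binom{d(v)}{f} \geq n\binom{\bar d}{f}$ with $\bar d \geq 2n^{1-1/f}$, swapping the sum to $\sum_{T \in \binom{V}{f}} |N(T)|$ where $N(T)$ is the common neighborhood, and applying pigeonhole to produce an $f$-set $T$ with $|N(T)| \geq f$ (valid for $n$ large enough in terms of $f$).

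For the inductive step from $k-1$ to $k$, let $H$ be a $k$-uniform hypergraph on $n$ vertices with $n^{k-f^{1-k}}$ edges. For each $(k-1)$-set $S \subseteq V(H)$ write $N(S)$ for the codegree, i.e., the number of $v \in V(H)$ with $S \cup \{v\} \in E(H)$; the average codegree is $\bar d = k\,e(H)/\binom{n}{k-1} \asymp n^{1-f^{1-k}}$. For each $f$-set $T \subseteq V(H)$ let $L_T$ be the $(k-1)$-uniform hypergraph on $V(H) \setminus T$ whose edges are the $(k-1)$-sets $S$ satisfying $S \cup \{v\} \in E(H)$ for every $v \in T$; equivalently, $L_T$ is the common link of the vertices of $T$. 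Switching the order of summation and applying convexity of $x \mapsto \binom{x}{f}$ gives
$$\sum_{T \in \binom{V(H)}{f}} |L_T| \;=\; \sum_{S \in \binom{V(H)}{k-1}} \binom{|N(S)|}{f} \;\geq\; \binom{n}{k-1}\binom{\bar d}{f} \;\gtrsim\; n^{k-1+f-f^{2-k}}.$$
Averaging over the $\binom{n}{f}$ choices of $T$ produces some $f$-set $T$ whose common link $L_T$ has at least $\Omega(n^{(k-1)-f^{2-k}})$ edges on the $n-f$ vertices of $V(H)\setminus T$. The key numerical observation is that $f^{2-k} = f^{1-(k-1)}$, so for $n$ large enough this is at least $(n-f)^{(k-1)-f^{1-(k-1)}}$, which is precisely the bound the inductive hypothesis requires.

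Applying the inductive hypothesis to $L_T$ yields pairwise disjoint sets $A_1,\dots,A_{k-1} \subseteq V(H)\setminus T$, each of size $f$, such that every transversal $(v_1,\dots,v_{k-1}) \in A_1 \times \cdots \times A_{k-1}$ is an edge of $L_T$. Setting $A_k := T$, the definition of $L_T$ ensures that $\{v_1,\dots,v_{k-1},v\} \in E(H)$ for every choice of $(v_1,\dots,v_{k-1}) \in A_1\times\cdots\times A_{k-1}$ and $v \in A_k$, which is precisely a complete $k$-partite $k$-uniform hypergraph with $f$ vertices in each part.

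The proof is essentially bookkeeping, and the only points that require a little care are (i) checking that the exponent $(k-1)-f^{2-k}$ produced by convexity matches the form $(k-1)-f^{1-(k-1)}$ required by the induction, (ii) absorbing the implicit $(k,f)$-dependent multiplicative constants coming from $\binom{\bar d}{f} \geq c_{f,k}\,\bar d^{\,f}$ and $\binom{n}{k-1}$ into a recursive definition of $n_0(k,f)$, and (iii) replacing $n^{(k-1)-f^{2-k}}$ by $(n-f)^{(k-1)-f^{2-k}}$, which is immediate for $n$ large enough. None of these should present any real difficulty, so I expect the main ``obstacle'' is purely notational rather than substantive.
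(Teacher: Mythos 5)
Theorem \ref{thm:k_uniform_Zarankiewicz} is cited in the paper as a known result of Erd\H{o}s \cite{E}; the paper gives no proof of it, so there is no internal proof to compare against. Your argument is, in essence, Erd\H{o}s's own: double-count pairs $(T,S)$ with $T$ an $f$-subset of the codegree neighbourhood $N(S)$ of a $(k-1)$-set $S$, apply convexity to produce an $f$-set $T$ whose common link $L_T$ is dense, use the identity $f^{2-k}=f^{1-(k-1)}$ to see that the resulting density matches the inductive hypothesis in uniformity $k-1$, and bottom out at K\H{o}v\'ari--S\'os--Tur\'an when $k=2$. The inductive structure, the summation swap, and the passage from $L_T$ to $A_1,\dots,A_{k-1},A_k=T$ are all correct.

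One point deserves more care than you give it. You describe verifying $\max_T|L_T| \geq (n-f)^{(k-1)-f^{1-(k-1)}}$ as ``purely notational'' and suggest absorbing the $(k,f)$-dependent multiplicative constants into a larger $n_0(k,f)$. That alone would not be enough: if the net constant $C=C(k,f)$ in $\max_T|L_T|\geq C\,n^{(k-1)-f^{2-k}}$ were strictly below $1$, enlarging $n_0$ would only hurt, since $((n-f)/n)^{(k-1)-f^{2-k}}\to 1 > C$ as $n\to\infty$. What actually rescues the step is the numerical fact that $C>1$: writing $\bar d = k\,e(H)/\binom{n}{k-1}$, one finds
$$
\max_T|L_T|\ \geq\ \frac{\binom{n}{k-1}\binom{\bar d}{f}}{\binom{n}{f}}\ =\ \bigl(1-o(1)\bigr)\,\frac{(k!)^f}{(k-1)!}\,n^{(k-1)-f^{2-k}},
$$
and $\frac{(k!)^f}{(k-1)!}=k^f\bigl((k-1)!\bigr)^{f-1}\geq k>1$. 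In words, the factorial gain from $\binom{n}{k-1}$ strictly dominates (and the $1/f!$ in $\binom{\bar d}{f}$ cancels against $\binom{n}{f}$). So the bookkeeping works, but because the constant lands above $1$ --- not because constants can always be swept into the choice of $n_0$. It would be worth making this check explicit in a full write-up.
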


For integers $k,f \geq 1$, let $B_{k,f}$ be the graph obtained by replacing each vertex of the cycle $C_k$ by a clique of size $f$,
and replacing each edge by a complete bipartite graph.

\begin{lemma}\label{lem:clique_blowup}
For every pair of integers $k \geq 3$ and $f \geq 1$ there is
$n_{\ref{lem:clique_blowup}} = n_{\ref{lem:clique_blowup}}(k,f)$ such that for every $n \geq n_{\ref{lem:clique_blowup}}$, the graph $B_{k,n/k}$ is $\frac{1}{2k^2}$-far from being induced $\{C_4,B_{k,f}\}$-free.
\end{lemma}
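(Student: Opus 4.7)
The plan is to control the modification $G'$ (with $|E(G') \triangle E(B)| < \frac{n^2}{2k^2}$, where $B := B_{k, n/k}$) through a $k$-partite $k$-uniform hypergraph $H$ on the blobs $V_1, \ldots, V_k$ of $B$ (each of size $n/k$), whose hyperedges are the transversals $(v_1, \ldots, v_k) \in V_1 \times \cdots \times V_k$ whose induced adjacency pattern in $G'$ matches the $C_k$-pattern, i.e., $v_iv_j \in E(G')$ iff $i, j$ are adjacent in $C_k$. The first step is to count: a transversal fails to be a hyperedge only when some pair $\{u, v\}$ with $u \in V_i$ and $v \in V_j$ is ``bad'' (has different $G'$- and $B$-adjacency). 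Since every such bad pair lies in exactly $(n/k)^{k-2}$ transversals, the number of bad transversals is at most $|E(G') \triangle E(B)| \cdot (n/k)^{k-2} < \frac{n^k}{2k^k}$, whereas there are $(n/k)^k$ transversals in total, so $|E(H)| \geq \frac{n^k}{2k^k}$.

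Next I will invoke Theorem \ref{thm:k_uniform_Zarankiewicz} with parameter $f$: once $n^{f^{1-k}} \geq 2k^k$, i.e., $n \geq (2k^k)^{f^{k-1}}$, the bound above beats $n^{k - f^{1-k}}$, so Erd\H{o}s' theorem produces sets $T_1, \ldots, T_k$ of size $f$ each. A short argument shows they must lie in distinct $V_i$'s (otherwise some transversal of $T_1 \times \cdots \times T_k$ would have two vertices in the same $V_i$, contradicting that it is a hyperedge of $H$), so we may assume $T_i \subseteq V_i$. Every transversal of $T_1 \times \cdots \times T_k$ is then a hyperedge, which is equivalent to saying that in $G'$ the bipartite graph between any two $T_i, T_j$ is complete if $i, j$ are adjacent in $C_k$ and empty otherwise.

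For $k \geq 4$ the argument finishes cleanly. If every $T_i$ induces a clique in $G'$, the between-blob structure makes $G'[T_1 \cup \cdots \cup T_k]$ precisely an induced $B_{k, f}$. Otherwise some $T_i$ (WLOG $T_1$) contains a non-edge $\{u, u'\}$; choosing any $v \in T_k$ and $v' \in T_2$, the hyperedge condition forces $uv, u'v, uv', u'v' \in E(G')$ (since blob $1$ is adjacent to both blobs $2$ and $k$) and, crucially for $k \geq 4$, $vv' \notin E(G')$ (since blobs $2$ and $k$ are non-adjacent in $C_k$ when $k \geq 4$). Combined with $uu' \notin E(G')$, the set $\{u, u', v, v'\}$ spans an induced $C_4$.

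The main obstacle is the case $k = 3$: here $B_{3, n/3} = K_n$ and any two blobs of $C_3$ are mutually adjacent, so the desired non-edge $vv'$ is instead forced to be an edge and the $C_4$-construction above collapses. I will handle this case separately via Theorem \ref{thm:GHS}. If $G'$ were induced $\{C_4, K_{3f}\}$-free, then $G'$ would be induced $C_4$-free with $\omega(G') \leq 3f-1$, so Theorem \ref{thm:GHS} would give $|E(G')| = O(\sqrt{f}\, n^{3/2})$ and therefore $|E(G') \triangle E(K_n)| = |E(G'^c)| \geq \binom{n}{2} - O(\sqrt{f}\, n^{3/2}) > \frac{n^2}{18}$ once $n \geq n_0(f)$, contradicting $|E(G') \triangle E(B)| < \frac{n^2}{18}$. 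In effect, $k = 3$ is precisely where the otherwise uniform Erd\H{o}s/hypergraph machinery must defer to the induced-$C_4$-free extremal result of Gy\'arf\'as, Hubenko and Solymosi.
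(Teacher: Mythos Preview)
Your argument is correct and, for $k\geq 4$, follows the paper's proof essentially verbatim: build the $k$-partite $k$-uniform hypergraph of ``good'' transversals, count that at least $\tfrac{1}{2}(n/k)^k$ survive, invoke Erd\H{o}s' theorem to extract $T_1,\dots,T_k$ with $T_i\subseteq V_i$, and then either all $T_i$ are cliques (giving an induced $B_{k,f}$) or a non-edge inside some $T_i$ together with a vertex from each of the two neighbouring blobs yields an induced $C_4$.

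Your separate treatment of $k=3$ is not just a stylistic choice: it actually repairs a gap in the paper's own proof. The paper picks $z\in U_2$ and $w\in U_k$ and asserts that $\{x,y,z,w\}$ spans an induced $C_4$; this requires $zw\notin E(G')$, which follows from the hypergraph condition only when blobs $2$ and $k$ are non-adjacent in $C_k$, i.e.\ when $k\geq 4$. For $k=3$ the pair $(U_2,U_3)$ is forced to be complete, so $\{x,y,z,w\}$ spans $K_4$ minus an edge rather than an induced $C_4$, and the paper's argument stalls. Your appeal to Theorem~\ref{thm:GHS} cleanly closes this case: if $G'$ were both induced $C_4$-free and $K_{3f}$-free, then $\omega(G')<3f$ forces $e(G')=O(\sqrt{f}\,n^{3/2})$, whence $|E(G')\triangle E(K_n)|\geq\binom{n}{2}-O(\sqrt{f}\,n^{3/2})>n^2/18$ for large $n$, contradicting the hypothesis. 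Note that the paper only invokes this lemma for $k\geq 5$ (in the proof of Theorem~\ref{thm:hard}), so the oversight is harmless for the paper's main results; still, your version is the one that matches the stated range $k\geq 3$.
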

\begin{proof}
Let $V_1,\dots,V_k$ be the sides of $G := B_{k,n/k}$ (each a clique of size $n/k$). Let $G'$ be a graph obtained from $G$ by adding/deleting at most $\frac{v(G)^2}{2k^2} = \frac{n^2}{2k^2}$ edges. Our goal is to show that $G'$ is not induced $\{C_4,B_{k,f}\}$-free. Let $H$ be the $k$-partite $k$-uniform hypergraph with parts $V_1,\dots,V_k$ whose edges are all $k$-tuples
$(v_1,\dots,v_k) \in V_1 \times \dots \times V_k$ such that
$v_1v_2\dots v_kv_1$ is an induced cycle in $G'$. Note that in $G$, every such $k$-tuple spans an induced cycle, and that adding/deleting an edge can destroy at most $\left( \frac{n}{k} \right)^{k-2}$ such cycles. Thus, $G'$ contains at least
$\left( \frac{n}{k} \right)^k -
\frac{n^2}{2k^2}\left( \frac{n}{k} \right)^{k-2} =
\frac{1}{2}\left( \frac{n}{k} \right)^k$ of these induced cycles, implying that $e(H) \geq \frac{1}{2}\left( \frac{n}{k} \right)^k$. For a large enough $n$ we have
$\frac{1}{2}\left( \frac{n}{k} \right)^k \geq n^{k - f^{1-k}}$ and
$n \geq n_{\ref{thm:k_uniform_Zarankiewicz}}(k,f)$. Thus, by Theorem \ref{thm:k_uniform_Zarankiewicz}, $H$ contains a complete $k$-partite $k$-uniform hypergraph with parts $U_i \subseteq V_i$, each of size $f$. This means that in the graph $G'$, $(U_i,U_j)$ is a complete bipartite graph if $j - i \equiv \pm 1 \pmod{k}$ and an empty bipartite graph otherwise. If $G'[U_i]$ is a clique for every $1 \leq i \leq k$ then
$U_1 \cup \dots \cup U_k$ spans an induced copy of $B_{k,f}$ in $G'$. Suppose then that $U_i$ is not a clique for some $1 \leq i \leq k$, say $i = 1$, and let $x,y \in U_1$ be such that $(x,y) \notin E(G')$. Then for every
$z \in U_2$ and $w \in U_k$, $\{x,y,z,w\}$ spans an induced copy of $C_4$ in $G'$. Thus, in any case $G'$ is not induced $\{C_4,B_{k,f}\}$-free.
\end{proof}
\begin{proof}[Proof of Theorem \ref{thm:hard}]
For $k \geq 5$ put $\varepsilon_k = \frac{1}{2k^2}$ and
$f_k = g(\varepsilon_k)$. We will show that the family
$\F = \{C_4\} \cup \{ B_{k,f_k} : k \geq 5\}$ satisfies the requirement. Let $k \geq 5$, let $n \geq n_{\ref{lem:clique_blowup}}(k,f_k)$ and set
$G = B_{k,n/k}$. By Lemma \ref{lem:clique_blowup}, $G$ is $\varepsilon_k$-far from being induced $\{C_4,B_{k,f_k}\}$-free. Since $C_4, B_{k,f_k} \in \F$, we get that $G$ is $\varepsilon_k$-far from being induced $\F$-free.

We claim that for every $4 \leq \ell < k$, $G$ is induced $C_{\ell}$-free. Suppose, for the sake of contradiction, that $x_1,\dots,x_{\ell},x_1$ is an induced $\ell$-cycle in $G$. Let $V_1,\dots,V_k$ be the sides of $G = B_{k,n/k}$. If $\left| \{x_1,\dots,x_\ell\} \cap V_i \right| \leq 1$ for every $1 \leq i \leq k$ then $x_1,\dots,x_{\ell}$ are contained in an induced path, which is impossible. So there is some $1 \leq i \leq k$ for which $\left| \{x_1,\dots,x_\ell\} \cap V_i \right| \geq 2$. Suppose without loss of generality that $x_1,x_2 \in V_1$ (recall that $V_1,\dots,V_k$ are cliques). Then $x_3 \in V_2$ or $x_3 \in V_k$, and in either case $x_1,x_2,x_3$ span a triangle, a contradiction.
%On the other hand, it is easy to see that $C_{\ell}$ is not an induced subgraph of $G$ for any
%$4 \leq \ell < k$ (??this was $\neq$. do we need to explain this??).

We conclude that the smallest
$F \in \mathcal{F}$ which is an induced subgraph of $G$, is $F = B_{k,f_k}$.
Thus, every induced subgraph of $G$ on less than $v(B_{k,f_k}) = k \cdot g(\varepsilon_k)$
vertices is induced $\mathcal{F}$-free, completing the proof.
\end{proof}

\end{document}